\newtheorem{theorem}{Theorem}[section]
\newtheorem{definition}[theorem]{Definition}
\newtheorem{lemma}[theorem]{Lemma}
\newtheorem{remark}[theorem]{Remark}
\newtheorem{corollary}[theorem]{Corollary}
\newcommand{\R}{{{\mathbb R}}}
\begin{document}
\title{\textbf{ The eigenvalue Characterization for the constant Sign Green's Functions of  $(k,n-k)$ problems.}}
\date{}
\author{Alberto Cabada\footnote{Partially supported by Ministerio de Educaci\'on y Ciencia, Spain and FEDER, projects MTM2010-15314 and MTM2013-43014-P.}   \;and  Lorena Saavedra\footnote{Supported by Plan I2C scholarship, Conselleria de Educaci\'on, Cultura e O.U., Xunta de Galicia, and FPU scholarship, Ministerio de Educaci\'on, Cultura y Deporte, Spain.}\\Departmento de An\'alise Matem\'atica,\\ Facultade de Matem\'aticas,\\Universidade de Santiago de Compostela,\\ Santiago de Compostela, Galicia,
	Spain\\
	alberto.cabada@usc.es, lorena.saavedra@usc.es
} 
\maketitle

\begin{abstract}
	This paper is devoted to the study of the sign of the Green's function related to a general linear $n^{\rm th}$-order operator, depending on a real parameter, $T_n[M]$, coupled with the $(k,n-k)$ boundary value conditions. 
	
	If operator $T_n[\bar M]$ is disconjugate for a given $\bar M$, we describe the interval of values on the real parameter $M$ for which the Green's function has constant sign. 
	
	One of the extremes of the interval is given by the first eigenvalue of operator $T_n[\bar M]$ satisfying  $(k,n-k)$ conditions. 
	
	The other extreme is related to the minimum (maximum) of the first eigenvalues of $(k-1,n-k+1)$ and $(k+1,n-k-1)$ problems.
	
	Moreover if $n-k$ is even (odd) the Green's function cannot be non-positive (non-negative). 

To illustrate the applicability of the obtained results, we calculate the parameter intervals of constant sign  Green's functions for particular operators. Our method avoids the necessity of calculating the expression of the Green's function.

We finalize the paper by presenting a particular equation in which it is shown that the disconjugation hypothesis on operator $T_n[\bar M]$  for a given $\bar M$ cannot be eliminated.

	\vspace{.5cm}
	\noindent \textbf{Key words:} $n$ order boundary value problem; Green's functions; disconjugation;
	Maximum Principles; Spectral Theory.\\
	
	\noindent
	\textbf{AMS Subject Classification:} 34B05, 34B08, 34B09, 34B27, 34C10\\
	
\end{abstract}

\section{Introduction}

 It is very well known that the validity of the method of lower and upper solutions, coupled with the monotone iterative techniques \cite{CoHa, llv}, is equivalent to the constant sign of the Green's function related to the linear part of the studied problem \cite{Cab, A2}. Moreover, by means of the celebrated Krasnosel'ski\u{\i} contraction/expansion fixed point theorem \cite{krasnoselskii}, nonexistence, existence and multiplicity results are derived from the construction of suitable cones on Banach spaces. Such construction follows by using adequate properties of the Green's function, one of them is its constant sign \cite{caci, graefkongwangAML, graefkongwang, to}. Recently, the combination of the two previous methods has been proved as a useful tool to ensure the existence of solution \cite{caci2, CCI, jc-df-fm, df-gi-jp-prse,persson}.

Having in mind the power of this constant sign property, we will describe the interval of parameters for which the Green's function related to the general linear $n^{\rm th}$-order equation 
\begin{equation}
\label{e-Ln}
T_n[M]\, u(t) \equiv u^{(n)}(t)+a_1(t)\, u^{(n-1)}(t)+ \cdots +a_{n-1}(t)\, u'(t)+(a_{n}(t)+M)\, u(t)=0
\end{equation}
$ t \in I \equiv [a,b]$,	coupled with the so-called $(k,n-k)$ two point boundary value conditions: 
\begin{equation}
\label{e-k-n-k}
u(a)=u'(a)=\cdots=u^{(k-1)}(a)=u(b)=u'(b)=\cdots=u^{(n-k-1)}(b)= 0,
\end{equation}
$1 \le k\le n-2$,
has constant sign on its square of definition $I \times I$.

The main hypothesis consists on assuming  that there is a real parameter $\bar M$ for which operator $T_n[\bar M]$ is disconjugate on $I$. 

An exhaustive study of the general theory and the fundamental properties of the disconjugacy are compiled  in the classical book of Coppel \cite{Cop}. Different sufficient criteria to ensure the disconjugacy character of the linear operator $T_n[0]$ has been developed in the literature, we refer the classical references \cite{Ze1, Ze2}. Sufficient conditions for particular cases have been obtained in \cite{Erbe,KwZe, Si} and, more recently, in \cite{Eli}. We mention that operator $u^{(n)}(t)+a_{1}(t)\, u^{(n-1)}(t)$ is always disconjugate in $I$, see \cite{Cop} for details, in particular the results here presented are valid for operator $u^{(n)}(t)+M\, u(t)$.

As it has been shown in \cite{Cop}, the disconjugacy character implies the constant sign of the Green's function $g_M$ related to problem \eqref{e-Ln}--\eqref{e-k-n-k}. However, as we will see along the paper, the reciprocal property is not true in general: there are real parameters $M$ for which the Green's function has constant sign but the equation \eqref{e-Ln} is not disconjugate. In other words, the disconjugacy character is only a sufficient condition in order to ensure the constant sign of a Green's function related to problem \eqref{e-Ln}--\eqref{e-k-n-k}.

In fact, from the disconjugacy character of operator $T_n[\bar M]$ in $I$, it is shown in \cite{Cop} that the Green's function $g_M$ satisfies a suitable condition, stronger than its constant sign. Such condition fulfills the one introduced in \cite[Section 1.8]{Cab}. So, following the results given in that reference we conclude that the set of parameters $M$ for which $g_M$ has constant sign is an interval $H_T$. Moreover if $n-k$ is even then the maximum of $H_T$ is the opposed to the biggest negative eigenvalue of problem \eqref{e-Ln}--\eqref{e-k-n-k}, when $n-k$ is odd the minimum of $H_T$ is the opposed to the least positive eigenvalue of such problem.

Thus, the difficulty remains in the characterization of the other extreme of the interval $H_T$. In this case, as it is shown in \cite[Section 1.8]{Cab}, such extreme is not an eigenvalue of the considered problem, so to attain its exact value is not immediate. In practical situations it is necessary to obtain the expression of the Green's function, which is, in general, a difficult matter to deal with. We point out that this problem is not restricted to the $(k,n-k)$ boundary conditions, the difficulty in obtaining the non eigenvalue extreme remains true for any kind of linear conditions \cite{cacisa, lfb}.  In \cite{CCM}, provided operator $T_n[M]$ has constant coefficients, it has been developed a computer algorithm that calculates the exact expression of a Green's function  coupled with two-point boundary value conditions. However, such expression is often  too complicated to manage, and to describe the interval $H_T$ is really very difficult in practical situations. In fact there is not a direct method of construction for non constant coefficients.

We mention that the disconjugacy theory has been used in \cite{ML} to obtain the values for which the third order operators $u'''+M u^{(i)}$, $i=0,1,2$, coupled with conditions $(1,2)$ and $(2,1)$ have constant sign Green's function. Similar procedure has been done in \cite{CE} for the fourth order operator $u^{(4)}+M u$,  coupled with conditions $(2,2)$ and, more recently, in  \cite{CaFe} with conditions $(1,3)$ and $(3,1)$. In all the situations it is obtained the interval of  disconjugacy and then, by means of the expression of the Green's function, it is proved that such interval is optimal. As we have mentioned above, this coincidence holds only in particular cases as the ones treated in these papers, in general the intervals of disconjugacy and constant sign Green's functions do not coincide for the $n^{th}$ - order operator $T_n[M]$.

It is for this that we make in this work a general characterization of the regular extreme of the interval of constant sign $H_T$ by means of the spectral theory. We will show that it is an eigenvalue of the same operator $T_n[M]$ but related to different two-point boundary value conditions. In fact, if $n-k$ is even, it will be the minimum of the two least positive eigenvalues related to conditions  $(k-1,n-k+1)$ and $(k+1,n-k-1)$. It will be the maximum of the two biggest negative eigenvalues of such problems when $n-k$ is odd. So, we make a general characterization for the general operator $T_n[M]$ and we avoid the necessity of calculate the Green's function and to study its sign dependence on the real parameter $M$. 

We note that if operator $T_n[M]$ has constant coefficients, to obtain the corresponding eigenvalues we only must to calculate the determinant of the matrix of coefficients of a linear homogeneous algebraic system. Numerical methods are also valid for the non-constant case.

It is important to mention that, as consequence of the obtained results, denoting by $g_M$ the Green's function related to problem \eqref{e-Ln}--\eqref{e-k-n-k}, we conclude that $(-1)^{n-k} \, g_M(t,s)$ cannot be negative on $I \times I$ for all $M \in \R$.

The paper is scheduled as follows: in a preliminary section 2 we introduce the fundamental concepts that are needed in the development of the paper. Next section is devoted to the proof of the main result in which the regular extreme is obtained via spectral theory.  In  section 4 some particular cases are considered where it is shown the applicability of the obtained results. In last section is introduced an example that shows that the disconjugacy hypothesis on the main result cannot be eliminated.
	
\section{Preliminaries}

In this section, for the convenience of the reader, we introduce the fundamental tools in the theory of disconjugacy and Green's functions that will be used in the development of further sections.

\begin{definition}
	Let $a_k\in C^{n-k}(I)$ for $k=1,\dots,n$. The $n^{\rm th}$-order linear differential equation (\ref{e-Ln}) is said to be disconjugate on an interval $I$ if every non trivial solution has less than $n$ zeros on $I$, multiple zeros being counted according to their multiplicity.
\end{definition}
\begin{definition}
	The functions $u_1,\dots, u_n \in C^n(I)$ are said to form a Markov system on the interval $I$ if the $n$ Wronskians
	\begin{equation}
	W(u_1,\dots,u_k)=\left| \begin{array}{ccc}
	u_1&\cdots&  u_k\\
	\vdots&\cdots&\vdots\\
	u_1^{(k-1)}&\cdots&u_k^{(k-1)}\end{array}\right| \,,\quad k=1,\dots,n \,,
	\end{equation}
	are positive throughout $I$.
\end{definition}

The following result about this concept is collected on \cite[Chapter 3]{Cop}.

\begin{theorem}\label{T::4}
	The linear differential equation (\ref{e-Ln}) has a Markov fundamental system of solutions on the compact interval $I$ if, and only if, it is disconjugate on $I$.
	
\end{theorem}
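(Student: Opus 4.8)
The plan is to prove the two implications separately, with the construction of the Markov system from disconjugacy being the substantial half.

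First I would treat the direction \emph{Markov system $\Rightarrow$ disconjugate}. The key tool is the P\'olya--Mammana factorization: from a Markov system $u_1,\dots,u_n$ with leading Wronskians $W_k=W(u_1,\dots,u_k)>0$, one rewrites the operator in factored form
\[
T_n[M]\,u = \rho_n\,\frac{d}{dt}\Bigl(\rho_{n-1}\,\frac{d}{dt}\bigl(\cdots \rho_1\,\frac{d}{dt}(\rho_0\,u)\cdots\bigr)\Bigr),
\]
where each $\rho_i$ is a positive continuous function built from ratios of consecutive Wronskians (e.g.\ $\rho_0=1/W_1$ and $\rho_i=W_{i-1}W_{i+1}/W_i^2$ for intermediate indices). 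I would verify this identity by direct computation, checking that the $u_j$ span the kernel of the factored operator and that positivity of the $W_k$ is precisely what makes every $\rho_i$ positive. Disconjugacy then follows from the generalized Rolle theorem: if a nontrivial solution $u$ had $n$ zeros on $I$ counted with multiplicity, then $\rho_0 u$ would have $n$ zeros, its derivative at least $n-1$, multiplication by the positive $\rho_1$ preserves this count, and peeling off the factors one at a time leaves, after $n-1$ steps, a function whose derivative is identically zero and which is therefore constant; the Rolle count forces this constant to vanish at some point, hence to vanish identically, and tracing back up the factorization gives $u\equiv 0$, contradicting nontriviality. Thus every nontrivial solution has fewer than $n$ zeros.

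For the converse, \emph{disconjugate $\Rightarrow$ Markov system}, I would argue by induction on the order $n$. The case $n=1$ is immediate, since $u'+a_1 u=0$ has the strictly positive solution $u_1=\exp(-\int a_1)$, so $W(u_1)=u_1>0$. For the inductive step the central object is a solution $u_1$ strictly positive on the \emph{closed} interval $I$. I would obtain it as follows: first show that disconjugacy on $[a,b]$ persists on a slightly larger interval $[a-\varepsilon,b]$, using continuous dependence of solutions on initial data together with the compactness of $I$; then take the solution with an $(n-1)$-fold zero at $a-\varepsilon$, which by disconjugacy admits no further zero and so is strictly positive on $(a-\varepsilon,b]\supset I$. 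With such a $u_1$ at hand, I would reduce the order by the substitution $u=u_1\,v$: since constants solve the $v$-equation, $w=v'$ satisfies a disconjugate equation of order $n-1$ on $I$, to which the induction hypothesis supplies a Markov system $w_1,\dots,w_{n-1}$. Lifting back via $u_{j+1}=u_1\int_a^t w_j$ and invoking the Wronskian identity $W(u_1,\dots,u_k)=u_1^{\,k}\,W(w_1,\dots,w_{k-1})$ shows that all leading Wronskians of $u_1,\dots,u_n$ are positive, completing the induction.

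The step I expect to be the main obstacle is the production of a globally positive solution $u_1$ on the closed interval. Disconjugacy as stated only forbids $n$ zeros inside $I$, so the naive candidate, the solution with an $(n-1)$-fold zero at the endpoint $a$, vanishes at $a$ and fails to be positive throughout $I$. The extension lemma, that disconjugacy on the compact interval $[a,b]$ implies disconjugacy on some open neighbourhood, is what repairs this, and its proof requires a genuine compactness-plus-continuity argument rather than a formal manipulation. The remaining ingredients, namely the factorization identity, the Wronskian reduction identity, and the Rolle count, are then routine verifications.
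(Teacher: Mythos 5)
Your first direction (Markov system $\Rightarrow$ disconjugacy, via the P\'olya factorization and the generalized Rolle count) is correct and standard. The genuine gap is in the induction step of the converse. There you assert that, once a positive solution $u_1$ is in hand, ``$w=v'$ satisfies a disconjugate equation of order $n-1$'', and you list this among the routine verifications. It is not routine: Rolle's theorem transfers zeros in the wrong direction (zeros of $v$ produce zeros of $v'$, never conversely), and inheritance of disconjugacy under reduction of order is in fact \emph{false} for an arbitrary positive solution $u_1$. Concretely, take $u'''=0$ on $[-1,1]$, which is disconjugate, and the positive solution $u_1(t)=t^2+\epsilon$ with $0<\epsilon<1$; for the solution $u(t)=t$ one gets $(u/u_1)'=W(u_1,u)/u_1^2=(\epsilon-t^2)/u_1^2$, which has two zeros in $[-1,1]$, so the reduced second-order equation is \emph{not} disconjugate. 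Hence the claim you need cannot follow from disconjugacy of the original equation and positivity of $u_1$ alone; any proof must use the special structure of your particular $u_1$. The naive repair---given $w$ with $n-1$ zeros, choose a constant $c$ so that $v-c$ has $n$ zeros and contradict disconjugacy through $u-c\,u_1$---also fails: if $w\ge 0$ has several zeros of even multiplicity, or if $v$ has a ``staircase'' shape, no single level set of $v$ contains enough points, so no member of the two-dimensional span of $u_1$ and $u$ acquires $n$ zeros.

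With your choice of $u_1$ (the solution with an $(n-1)$-fold zero at $c=a-\varepsilon$) the reduced equation \emph{is} disconjugate, but establishing that is essentially as hard as the theorem itself, and this is precisely why the classical proof that the paper invokes (the paper gives no proof of Theorem \ref{T::4}; it cites Coppel, Chapter 3) does not induct on the order. Coppel's argument is direct: after extending disconjugacy to $[c,b]$ (your extension lemma, which is indeed needed and correctly identified), define $u_k$ for $k=1,\dots,n$ as the solution with a zero of multiplicity $n-k$ at $c$, suitably normalized. If $W(u_1,\dots,u_k)(t_0)=0$ for some $t_0\in(c,b]$, a nontrivial combination $\sum_{i\le k}c_iu_i$ would have a $k$-fold zero at $t_0$ and, since each $u_i$ with $i\le k$ has a zero of multiplicity at least $n-k$ at $c$, a total of at least $n$ zeros on $[c,b]$, contradicting disconjugacy; the sign of each Wronskian as $t\to c^+$ is then read off from the leading powers of $(t-c)$, giving $W(u_1,\dots,u_k)>0$ on $[a,b]$ in one stroke. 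If you replace your induction step by this direct Wronskian construction, the proof closes; as written, the step ``the reduced equation is disconjugate'' is an unproved (and, for general $u_1$, false) assertion, so the argument has a genuine gap.
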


In order to introduce the concept of Green's function related to the $n^{\rm th}$ - order scalar problem \eqref{e-Ln}-\eqref{e-k-n-k}, we consider the following equivalent first order vectorial problem:

\begin{equation}\label{Ec::vec}
x'(t)=A(t)\, x(t)\,,\ t\in I\,,\quad 
B\,x(a)+C\,x(b)=0,
\end{equation}
with $x(t) \in \R^n$, $A(t), \, B,\,\ C\in \mathcal{M}_{n\times n}$, defined by
\[x(t)=\left( \begin{array}{c} 
u(t)\\
u'(t)\\
\vdots\\
u^{(n-1)}(t) \end{array}\right),\, \quad A(t)=\left( \begin{array}{c|c}
&\\
0&\quad I_{n-1}\quad\\
&\\
\hline
-(a_n(t)+M)&-a_{n-1}(t)\cdots-a_1(t) \end{array}\right), \]

\begin{equation}\label{Ec::Cf}
B=\left( \begin{array}{c|c} I_k&0\\
\hline 0&0\end{array}\right), \;\quad C=\left( \begin{array}{c|c} 0&0\\
\hline I_{n-k}&0\end{array}\right).
\end{equation}

Here $I_j$, $j=1, \ldots,n-1$, is the $j \times j$ identity matrix.

\begin{definition} \label{Def::G}
	We say that $G$ is a Green's function for problem \eqref{Ec::vec} if it satisfies the following properties:
	\begin{itemize}
\item[$\mathrm{(G1)}$] $G\equiv (G_{i,j})_{i,j}\in{1,\dots,n}\colon (I\times I)\backslash \left\lbrace (t,t)\,,\ t\in I\right\rbrace \rightarrow \mathcal{M}_{n\times n}$.

\item[$\mathrm{(G2)}$] $G$ is a $C^{1}$ function on the triangles $\left\lbrace (t,s)\in \mathbb{R}^2\,,\quad a\leq s<t\leq b\right\rbrace $ and $\left\lbrace (t,s)\in \mathbb{R}^2\,,\ a\leq t < s\leq b\right\rbrace $.

\item[$\mathrm{(G3)}$] For all $i\neq j$ the scalar functions $G_{i,j}$ have a continuous extension to $I\times I$.

\item[$\mathrm{(G4)}$] For all $s\in(a,b)$, the following equality holds:
\[\dfrac{\partial }{\partial t}\, G(t,s)=A(t)\,G(t,s)\,,\quad \text{for all } t\in I\backslash \left\lbrace s\right\rbrace \,.\]

\item[$\mathrm{(G5)}$] For all $s\in(a,b)$ and $i\in\left\lbrace 1,\dots, n\right\rbrace $, the following equalities are fulfilled:
\[\lim_{s\rightarrow t^+}G_{i,i}(s,t)=\lim_{s\rightarrow t^-}G_{i,i}(t,s)=1+\lim_{s\rightarrow t^+}G_{i,i}(t,s)=1+\lim_{s\rightarrow t^-}G_{i,i}(s,t)\,.\]

	\item[$\mathrm{(G6)}$] For all $s\in(a,b)$, the function $t\rightarrow G(t,s)$ satisfies the boundary conditions
	\[B\,G(a,s)+C\,G(b,s)=0\,.\]
	\end{itemize}
\end{definition}

It is very well known that Green's function related to this problem follows the following expression \cite[Section 1.4]{Cab}
\begin{equation}\label{Ec:MG} G(t,s)=\left( \begin{array}{ccccc}
g_1(t,s)&g_2(t,s)&\cdots&g_{n-1}(t,s)&g_M(t,s)\\&&&&\\
\dfrac{\partial }{\partial t}\,g_1(t,s)& \dfrac{\partial }{\partial t}\,g_2(t,s)&\cdots&\dfrac{\partial }{\partial t}\,g_{n-1}(t,s)& \dfrac{\partial }{\partial t}\,g_M(t,s)\\
\vdots&\vdots&\cdots&\vdots&\vdots\\
\dfrac{\partial^{n-1} }{\partial t^{n-1}}\,g_1(t,s)&\dfrac{\partial^{n-1} }{\partial t^{n-1}}\,g_2(t,s)&\cdots&\dfrac{\partial^{n-1} }{\partial t^{n-1}}\,g_{n-1}(t,s)&\dfrac{\partial^{n-1}} {\partial t^{n-1}}\,g_M(t,s)\end{array} \right) \,,\end{equation}
where $g_M(t,s)$ is the scalar Green's function related to problem (\ref{e-Ln})-(\ref{e-k-n-k}). 

Using  Definition \ref{Def::G} we can deduce the properties fulfilled by $g_M(t,s)$.
In particular, $g_M\in C^{n-2}(I)$ and it satisfies, as a function of $t$, the two-point boundary value conditions \eqref{e-k-n-k}.

We  also mention a  result which appears on \cite[Chapter 3, Section 6]{Cop} and that connects the disconjugacy and the sign of the Green's function related to the problem (\ref{e-Ln})-(\ref{e-k-n-k}).

\begin{lemma}\label{L::1}
	If the linear differential equation (\ref{e-Ln}) is disconjugate and $g_M(t,s)$ is the Green's function related to the problem (\ref{e-Ln})-(\ref{e-k-n-k}), hence
	\begin{eqnarray}\nonumber
	g_M(t,s)\,p(t)&\geq &0\,,\quad (t,s)\in I \times I\,,
	\\\nonumber&&\\\nonumber
	\dfrac{g_M(t,s)}{p(t)}&>&0\,,\quad (t,s)\in[a,b]\times (a,b)\,.
	\end{eqnarray}
	where $p(t)=(t-a)^k\,(t-b)^{n-k}$.
\end{lemma}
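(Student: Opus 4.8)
\emph{Plan of proof.} The plan is to fix $s\in(a,b)$, to study $g(t):=g_M(t,s)$ as a function of $t$, and to reduce both displayed estimates to one interior sign statement. By \eqref{e-k-n-k} the function $g$ vanishes to order at least $k$ at $a$ and to order at least $n-k$ at $b$, so near the endpoints $g(t)=c_a(s)\,(t-a)^k+o((t-a)^k)$ and $g(t)=c_b(s)\,(t-b)^{n-k}+o((t-b)^{n-k})$. Hence $g(t)/p(t)$, with $p(t)=(t-a)^k(t-b)^{n-k}$, extends continuously to all of $[a,b]$, its endpoint values being positive multiples of $(-1)^{n-k}c_a(s)$ and of $c_b(s)$. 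Since $g_M$ is continuous on $I\times I$ and vanishes when the second variable is $a$ or $b$, the closed-square inequality $g_M(t,s)\,p(t)\ge 0$ will follow by continuity once the strict one is known; thus the whole lemma reduces to showing that $(-1)^{n-k}g(t)>0$ for all $t\in(a,b)$ and that $c_a(s)$ has sign $(-1)^{n-k}$ while $c_b(s)>0$. Throughout I would keep available a Markov fundamental system $u_1,\dots,u_n$, furnished by Theorem~\ref{T::4}, whose Wronskians $W(u_1,\dots,u_j)$ are all positive on $I$.

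To obtain the sign I would exploit the positivity that disconjugacy forces on the operator through its P\'olya factorization: there are strictly positive functions $\rho_0,\dots,\rho_n$ on $I$ with
\[ T_n[M]\,u=\rho_n\,\frac{d}{dt}\Big(\rho_{n-1}\,\frac{d}{dt}\big(\cdots\,\rho_1\,\tfrac{d}{dt}(\rho_0\,u)\cdots\big)\Big). \]
Writing $L_0u=\rho_0u$ and $L_ju=\rho_j(L_{j-1}u)'$ for the associated quasi-derivatives, the passage from $(u,u',\dots)$ to $(L_0u,L_1u,\dots)$ is lower triangular with strictly positive diagonal, so the conditions \eqref{e-k-n-k} are equivalent to $L_0u(a)=\cdots=L_{k-1}u(a)=0$ together with $L_0u(b)=\cdots=L_{n-k-1}u(b)=0$. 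Solving the problem that defines $g_M(\cdot,s)$ then amounts to $n$ successive integrations against the positive weights $1/\rho_j$, the $n$ constants of integration being fixed by the $k$ conditions at $a$ and the $n-k$ conditions at $b$. Tracking these limits of integration exhibits $g_M(t,s)$ as an iterated integral of a product of strictly positive functions, and it is the number $n-k$ of conditions carried from the right endpoint that produces the global sign $(-1)^{n-k}$ and the strict positivity of the leading coefficients $c_a(s),c_b(s)$ — exactly the reduced claim. As an independent check of the structure one may note that the two one-sided solutions $G_L,G_R$ agreeing with $g$ on $[a,s]$ and on $[s,b]$ satisfy that $w:=G_L-G_R$ is a nontrivial solution with a zero of order exactly $n-1$ at $s$, whence by disconjugacy $w$ has no further zero on $I$.

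The step I expect to be the real obstacle is precisely this determination of the sign, that is, the exclusion of interior zeros of $g$. Plain disconjugacy — the bound $n-1$ on the total number of zeros of a solution — is too weak for a naive count: a solution carrying a $k$-fold zero at $a$ may still have up to $n-1-k$ interior zeros, so for $k\ge 2$ an interior zero of $g$ does not by itself force a solution with $n$ zeros, and the zero-counting argument that suffices when $k=1$ breaks down. What must do the quantitative work is the sharper, sign-definite information of the Markov system, equivalently the positivity of every $\rho_j$ in the factorization, equivalently the total positivity of the kernel $g_M$. I would therefore arrange the proof so that disconjugacy is invoked only to guarantee that the factorization exists and that $w$ has no spurious zeros, while the positivity of the $\rho_j$ simultaneously rules out interior zeros of $g$ and pins the global sign as $(-1)^{n-k}$.
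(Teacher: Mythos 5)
First, a point of reference: the paper does not prove this lemma at all. It is quoted directly from Coppel's monograph \cite[Chapter~3, Section~6]{Cop}, where it is a classical theorem (essentially due to \v{C}i\v{c}kin and Levin) whose proof is an induction on the order $n$ built on the P\'olya--Mammana factorization. So there is no in-paper proof to compare against, and your attempt has to be judged as a reproof of that classical result.

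Your reduction of the statement is correct and well organized: fixing $s$, reading off the vanishing orders $k$ at $a$ and $n-k$ at $b$ from \eqref{e-k-n-k}, noting that the closed-square inequality follows from the open one by continuity, and, most importantly, recognizing that plain disconjugacy (the bound of $n-1$ zeros) does \emph{not} by itself exclude interior zeros of $g_M(\cdot,s)$ once $k\ge 2$, so that the factorization must carry the quantitative burden. That diagnosis is exactly right. The gap is that the step you flag as ``the real obstacle'' is the whole theorem, and the mechanism you propose for it does not work as described. You claim that solving the problem via the factorization amounts to $n$ successive integrations against positive weights, with the limits of integration dictated by the $k$ conditions at $a$ and the $n-k$ conditions at $b$, so that $g_M$ appears as an iterated integral of positive functions. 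But in the quasi-derivative variables the boundary conditions read $L_0u(a)=\cdots=L_{k-1}u(a)=0$ \emph{and} $L_0u(b)=\cdots=L_{n-k-1}u(b)=0$: both endpoint sets constrain the \emph{low-order} quasi-derivatives (already $L_0u$ must vanish at both ends whenever $1\le k\le n-1$), so the integration constants cannot be fixed one integration at a time and the kernel is not a composition of one-signed first-order kernels with fixed anchors. Already for $u''$ with $u(a)=u(b)=0$ the Green's function $(t-a)(s-b)/(b-a)$ is not produced by two integrations each anchored at a single endpoint, yet its constant sign is precisely the $n=2$ case of the lemma. The genuine proof requires an induction on $n$ (peel off one factor of the decomposition, relate the $(k,n-k)$ Green's function of the order-$n$ operator to that of a $(k,n-k-1)$ or $(k-1,n-k)$ problem of order $n-1$, and propagate the sign), or an equivalent total-positivity argument; none of this is supplied. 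Your auxiliary observation that $w=G_L-G_R$ is a nontrivial solution with a zero of exact order $n-1$ at $s$, hence with no further zeros, is correct and useful, but it does not close this gap.
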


The adjoint of the operator $T_n[M]$, is given by the following expression, see for details \cite[Section 1.4]{Cab} or \cite[Chapter 3, Section 5]{Cop},
\begin{equation}\label{EC::Ad}
T_n^*[M]v (t)\equiv (-1)^n \,v^{(n)}(t)+\sum_{j=1}^{n-1}(-1)^j\,\left(a_{n-j}\,v\right)^{(j)}(t)+(a_n(t)+M)\,v(t)\,,
\end{equation}
and its domain of definition is
{\footnotesize \begin{eqnarray} \label{DA}
 D(T_n^*[M])&=&\left\lbrace v\in C^n(I)\ \mid \sum_{j=1}^{n}\sum_{i=0}^{j-1} (-1)^{j-1-i} (a_{n-j}\,v)^{(j-1-i)}(b)\,u^{(i)}(b)\right. \\\nonumber
& &\left. =\sum_{j=1}^{n}\sum_{i=0}^{j-1} (-1)^{j-1-i} (a_{n-j}\,v)^{(j-1-i)}(a)\,u^{(i)}(a) \ \text{ (with $a_0=1$)}\,, \forall u\in D(T_n[M])\right\rbrace \,.
\end{eqnarray}}

In our case, because of boundary conditions (\ref{e-k-n-k}), we can express the domain of the operator $T_n[M]$, $D(T_n[M])$, as
{ \begin{equation*}
X_k= \left\lbrace u\in C^n(I)\ \mid u(a) =\dots=u^{(k-1)}(a)=u(b)=\dots=u^{(n-k-1)}(b)=0\right\rbrace,
\end{equation*}}
so we can replace expression (\ref{DA}) with

{\scriptsize \begin{eqnarray*}\nonumber
	D(T_n^*[M])&=&\left\lbrace v\in C^n(I)\ \mid \sum_{j=n-k+1}^{n}\sum_{i=n-k}^{j-1} (-1)^{j-1-i} (a_{n-j}\,v)^{(j-1-i)}(b)\,u^{(i)}(b)\right. \\\nonumber
	& &\left. = \sum_{j=k+1}^{n}\sum_{i=k}^{j-1} (-1)^{j-1-i} (a_{n-j}\,v)^{(j-1-i)}(a)\,u^{(i)}(a) \ \text{ (with $a_0=1$)}\,,\  \forall u\in C^n(I)\right\rbrace \,.\\&&
	\end{eqnarray*}}

In order to simplify the previous expression, we choose a function $u\in C^n(I)$ satisfying
\begin{eqnarray}\nonumber u^{(\sigma)}(a)&=&0\,,\ \sigma =1,\dots,n-1\,,\\
\nonumber u^{(\mu)}(b)&=&0\,,\ \mu=1,\dots,n-2\,,\\\nonumber
u^{(n-1)}(b)&=&1\,.\end{eqnarray}

Realizing that $a_0=1$, we conclude that every function $v\in D(T_n^*[M])$ must satisfy $v(b)=0$.

Moreover, if we now choose a function in $C^n(I)$ that satisfies
\begin{eqnarray}\nonumber u^{(\sigma)}(a)&=&0\,,\ \sigma =1,\dots,n-1\,,\\
\nonumber u^{(\mu)}(b)&=&0\,,\ \mu=1,\dots,n-1\,,\quad \mu\neq n-2\\\nonumber
u^{(n-2)}(b)&=&1,\,\end{eqnarray}
we conclude that any function $v\in D(T_n^*[M])$ has to satisfy
\[-v'(b)+a_1(b)\,v(b)=0.\] 
Since  $a_1\in C^{n-1}(I)$ and  $v(b)=0$,  we conclude that $v'(b)=0$.

Repeating this process we achieve that the domain of the adjoint operator is given by
\begin{equation}
D(T_n^*[M])=X_{n-k}\,.
\end{equation}

The next result appears in \cite[Chapter 3, Theorem 9]{Cop}
\begin{theorem}\label{T::2}
	The equation (\ref{e-Ln}) is disconjugate on an interval $I$ if, and only if, the adjoint equation, $T_n^*[M]\,y(t)=0$ is disconjugate on $I$.
\end{theorem}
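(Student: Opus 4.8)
The plan is to reduce the equivalence to the P\'olya--Mammana factorization of disconjugate operators and then exploit the fact that this factorization is essentially self-dual. By Theorem~\ref{T::4}, disconjugacy of \eqref{e-Ln} on the compact interval $I$ amounts to the existence of a Markov fundamental system $u_1,\dots,u_n$, that is, with $W_k:=W(u_1,\dots,u_k)>0$ on $I$ for every $k$ (and $W_0:=1$). From this system I would write the operator as a product of first order factors with strictly positive continuous coefficients,
\[
T_n[M]\,u=\rho_n\,\frac{d}{dt}\!\left(\rho_{n-1}\,\frac{d}{dt}\!\left(\cdots\rho_1\,\frac{d}{dt}\bigl(\rho_0\,u\bigr)\cdots\right)\right),
\]
one admissible choice being $\rho_0=1/W_1$, $\rho_k=W_k^2/(W_{k-1}W_{k+1})$ for $1\le k\le n-1$, and $\rho_n=W_n/W_{n-1}$. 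Each $\rho_i$ is continuous and strictly positive on $I$ precisely because the Markov hypothesis forces every $W_k$ to be positive throughout the compact interval; a direct check shows $\prod_i\rho_i=1$, so the factorization is monic, in agreement with the leading term $(-1)^n v^{(n)}$ of \eqref{EC::Ad}.

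With the factorization in hand, the second step is to compute the formal adjoint factor by factor. Writing the product above as a composition of multiplication operators and the derivation $D=d/dt$, and using that multiplication operators are formally self-adjoint while $D^{*}=-D$, the adjoint reverses the order of the factors and flips the sign of each derivation:
\[
T_n^*[M]\,v=(-1)^n\,\rho_0\,\frac{d}{dt}\!\left(\rho_1\,\frac{d}{dt}\!\left(\cdots\rho_{n-1}\,\frac{d}{dt}\bigl(\rho_n\,v\bigr)\cdots\right)\right).
\]
Up to the harmless constant factor $(-1)^n$, which alters neither the solution space nor the number of zeros of any solution, this is again a factorization of the very same type, with the positive coefficients $\rho_0,\dots,\rho_n$ merely listed in reverse order. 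Hence $T_n^*[M]$ possesses a P\'olya factorization with positive continuous coefficients, which in turn provides a Markov fundamental system for $T_n^*[M]\,v=0$; by Theorem~\ref{T::4} this equation is therefore disconjugate. Finally, since the biadjoint recovers the original operator, $(T_n^*[M])^{*}=T_n[M]$, the reverse implication is obtained simply by applying the forward argument to $T_n^*[M]$, giving the desired equivalence.

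I expect the only genuine difficulty to lie in the first step: producing a factorization whose coefficients are not just nonvanishing but strictly positive and continuous on the whole of $I$. This is where compactness of $I$ is used, guaranteeing that $W_1=u_1$ keeps a fixed sign and that the successive reductions of order do not degenerate; the Markov condition is exactly what propagates positivity through all intermediate Wronskians. Once the factorization is secured, the adjoint computation is routine bookkeeping, the only subtlety being to track the sign $(-1)^n$ and to note that it is irrelevant for disconjugacy. An alternative route, avoiding the explicit factorization, would be to use that a fundamental matrix of the adjoint first order system associated with \eqref{Ec::vec} is $(\Phi^{-1})^{T}$, where $\Phi$ solves $x'=A(t)\,x$; the Wronskians of the resulting adjoint system are reciprocal ratios of the original ones, so the Markov property transfers directly. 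The factorization argument, however, seems cleaner to present.
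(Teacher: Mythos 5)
The paper offers no proof of Theorem~\ref{T::2}: it is quoted directly from Coppel \cite[Chapter 3, Theorem 9]{Cop}, so there is no internal argument to compare yours against. That said, your proof is essentially the classical one used in that source, and it is correct: disconjugacy on the compact interval $I$ is equivalent, via Theorems~\ref{T::4} and~\ref{T::3}, to a P\'olya--Frobenius factorization with positive coefficients; the formal adjoint of such a composition is the composition in reverse order with the global sign $(-1)^n$; a reversed factorization is again of the same type; and the biadjoint identity gives the converse. Your bookkeeping checks out: with $W_0=1$ the product $\prod_i\rho_i$ telescopes to $1$, and the reversed factorization matches the form \eqref{e-descomp} exactly (take $w_1=1/\rho_n,\dots,w_n=1/\rho_1$, whose product is $\rho_0$). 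Two small points deserve explicit mention. First, you should note that the operator obtained by reversing the factors coincides with $T_n^*[M]$ as defined in \eqref{EC::Ad}; this follows from uniqueness of the Lagrange adjoint (both satisfy the Lagrange identity) or by direct expansion. Second, the regularity of the coefficients: Theorem~\ref{T::3} requires $v_k\in C^{n-k+1}(I)$, and after reversal the factor that lands in the innermost position is only guaranteed, by a naive count of derivatives in the Wronskians, the low regularity it needed in the outermost position (e.g.\ $W_n/W_{n-1}$ is a priori only $C^2$, while the reversed form wants $C^n$ there). This is repaired either by the classical identity that $W(u_1,\dots,u_{n-1})/W_n$ solves the adjoint equation and is therefore $C^n$, or by running the P\'olya--Rolle zero-counting argument directly on the reversed factorization, which needs only continuity and positivity of the $\rho_i$ together with differentiability of the successive quasi-derivatives of a $C^n$ solution. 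Neither point invalidates the argument, but both should be acknowledged for the proof to be complete.
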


We denote $g_M^*(t,s)$ as the Green function of the adjoint operator, $T_n^*[M]$.

In \cite[Section 1.4]{Cab} it is proved the following relationship
\begin{equation} \label{Ec::gg}
g^*_M(t,s)=g_M(s,t)\,.
\end{equation}

Defining now the following operator
\begin{equation}\label{Ec::Tg}
\widehat{T}_n[(-1)^n\,M]:=(-1)^n T_n^*[M] \,,
\end{equation}
we deduce, from the previous expression, that
\begin{equation}\label{Ec::gg1}
\widehat{g}_{(-1)^n\,M}(t,s)=(-1)^n\,g_M^*(t,s)=(-1)^n\,g_{M}(s,t)\,.
\end{equation}

Obviously, Theorem \ref{T::2} remains true for operator $\widehat{T}_n[(-1)^n\,M]$.

\begin{definition}
\label{d-IP}
Operator $T_n[M]$ is said to be inverse positive (inverse negative) on $X_k$ if every function $u \in X_k$ such that $T_n[M]\, u \ge 0$ in $I$, must verify $u\geq 0$ ($u\leq 0$) on $I$.
\end{definition}

Next results are proved in \cite[Section 1.6, Section 1.8]{Cab}.

\begin{theorem}\label{T::in1}
	Operator $T_n[M]$ is inverse positive (inverse negative) on $X_k$ if, and only if, Green's function related to problem (\ref{e-Ln})-(\ref{e-k-n-k}) is non-negative (non-positive) on its square of definition.
\end{theorem}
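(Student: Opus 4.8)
The plan is to exploit the integral representation of solutions of the nonhomogeneous problem in terms of the scalar Green's function $g_M$. Recall that, by the construction of $g_M$ displayed in \eqref{Ec:MG} together with Definition \ref{Def::G}, whenever the homogeneous problem \eqref{e-Ln}--\eqref{e-k-n-k} has only the trivial solution, the unique solution $u\in X_k$ of $T_n[M]\,u=h$ is given by
\[
u(t)=\int_a^b g_M(t,s)\,h(s)\,\mathrm{d}s,\qquad t\in I.
\]
This identity, valid for every $u\in X_k$ with $h:=T_n[M]\,u$, is the single tool that drives both implications, and the argument is entirely symmetric between the inverse positive / non-negative and the inverse negative / non-positive statements.

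First I would prove sufficiency. Assume $g_M\ge 0$ on $I\times I$ and take any $u\in X_k$ with $T_n[M]\,u\ge 0$ on $I$. Setting $h:=T_n[M]\,u\ge 0$, the representation above expresses $u(t)$ as the integral of the product of the two nonnegative functions $g_M(t,\cdot)$ and $h$, hence $u(t)\ge 0$ for every $t\in I$. Thus $T_n[M]$ is inverse positive on $X_k$, and replacing $g_M\ge 0$ by $g_M\le 0$ yields the inverse negative case.

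The converse is where the work lies, and I would argue by contradiction together with an approximate identity. Suppose $T_n[M]$ is inverse positive but $g_M(t_0,s_0)<0$ for some $(t_0,s_0)\in I\times I$. Since $g_M$ is $C^{n-2}$ in $t$ by the remarks following \eqref{Ec:MG}, and continuous in $s$ through the adjoint relation \eqref{Ec::gg}, we may take $s_0\in(a,b)$. I would then choose nonnegative functions $h_m\in C(I)$ supported in shrinking neighbourhoods of $s_0$, normalized so as to form an approximate identity concentrating at $s_0$. The solutions $u_m(t)=\int_a^b g_M(t,s)\,h_m(s)\,\mathrm{d}s$ lie in $X_k$ and satisfy $T_n[M]\,u_m=h_m\ge 0$, so inverse positivity forces $u_m\ge 0$ on $I$; in particular $u_m(t_0)\ge 0$. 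On the other hand, continuity of $g_M$ in its second variable gives $u_m(t_0)\to g_M(t_0,s_0)<0$, a contradiction. Hence $g_M\ge 0$ throughout $I\times I$, and the inverse negative statement follows in the same way.

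The main obstacle is precisely this limit passage in the converse: one must guarantee that concentrating the data $h_m$ at $s_0$ produces solutions converging pointwise to the slice $t\mapsto g_M(t,s_0)$. This rests on the continuity of $g_M$ in $s$, which is furnished by the symmetry identity \eqref{Ec::gg} and the regularity of the adjoint Green's function; once that is in hand, the treatment of the boundary and diagonal points is a routine continuity argument.
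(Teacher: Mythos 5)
Your proof is correct and follows the standard route---the integral representation $u(t)=\int_a^b g_M(t,s)\,h(s)\,\mathrm{d}s$ for sufficiency, and concentration of nonnegative data at a point where $g_M$ would be negative for necessity---which is precisely the argument of the reference \cite{Cab} to which the paper defers, since Theorem \ref{T::in1} is stated here without proof. The only detail worth making explicit in your converse direction is that the existence of $g_M$ must be secured before it is used: if $u\in X_k$ solves the homogeneous problem, then applying inverse positivity to both $u$ and $-u$ gives $u\equiv 0$, so $T_n[M]$ is injective on $X_k$ and the Green's function (whose continuity on $I\times I$, needed for your limit passage, follows from $g_M\in C^{n-2}$ with $n\ge 3$) is indeed well defined.
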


\begin{theorem}\label{T::d1}
	Let $M_1$, $M_2\in\mathbb{R}$ and suppose that operators $T_n[M_j]$, $j=1,2$, are invertible in $X_k$.
	Let $g_j$, $j=1,2$, be Green's functions related to  operators $T_n[M_j]$ and suppose that both functions have the same constant sign on $I \times I$. Then, if $M_1<M_2$, it is satisfied that $g_2\leq g_1$ on $I \times I$.
\end{theorem}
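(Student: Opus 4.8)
The plan is to exploit the fact that the two operators differ only in the zeroth-order term, namely $T_n[M_2]\,u = T_n[M_1]\,u + (M_2-M_1)\,u$. Fixing $s\in(a,b)$, I would introduce the difference $w(t):=g_2(t,s)-g_1(t,s)$ and show that it is a classical solution of a linear problem governed by $T_n[M_1]$ with a regular right-hand side, so that the representation through $g_1$ applies.

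First I would record that $w$, as a function of $t$, satisfies the boundary conditions \eqref{e-k-n-k}, since both $g_1(\cdot,s)$ and $g_2(\cdot,s)$ do. Next, away from the diagonal $t=s$ each $g_j(\cdot,s)$ solves the homogeneous equation $T_n[M_j]\,g_j(\cdot,s)=0$; rewriting $T_n[M_1]\,g_2(\cdot,s)=T_n[M_2]\,g_2(\cdot,s)-(M_2-M_1)\,g_2(\cdot,s)=-(M_2-M_1)\,g_2(\cdot,s)$ together with $T_n[M_1]\,g_1(\cdot,s)=0$ yields
\[
T_n[M_1]\,w(t)=-(M_2-M_1)\,g_2(t,s),\qquad t\in I\setminus\{s\}.
\]

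The delicate point, and the one I expect to be the main obstacle, is the behaviour at $t=s$. Because the leading coefficient of $T_n[M]$ equals $1$ independently of $M$, both $g_1(\cdot,s)$ and $g_2(\cdot,s)$ inherit the same $C^{n-2}$ regularity and the same jump of the $(n-1)$th derivative across $s$; consequently these singular contributions cancel in the difference, so that $w\in C^{n-1}(I)$ and the displayed equation extends across the diagonal with a right-hand side $-(M_2-M_1)\,g_2(\cdot,s)$ that is continuous on $I$. Once this cancellation is made rigorous, $w$ is a genuine solution of the problem governed by $T_n[M_1]$ subject to \eqref{e-k-n-k}; since $T_n[M_1]$ is invertible on $X_k$, the defining property of the Green's function $g_1$ gives the representation
\[
g_2(t,s)-g_1(t,s)=-(M_2-M_1)\int_a^b g_1(t,r)\,g_2(r,s)\,dr.
\]

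Finally I would close with a sign argument. By hypothesis $g_1$ and $g_2$ share the same constant sign on $I\times I$, so the product $g_1(t,r)\,g_2(r,s)$ is nonnegative for every $t,r,s$ (whether both functions are non-negative or both non-positive), whence the integral above is nonnegative. Since $M_1<M_2$ forces $M_2-M_1>0$, the right-hand side is nonpositive, and therefore $g_2(t,s)\le g_1(t,s)$ on $I\times I$, as claimed.
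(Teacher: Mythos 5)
Your argument is correct and is essentially the standard proof of this comparison result: the paper itself does not prove Theorem \ref{T::d1} but cites \cite[Sections 1.6, 1.8]{Cab}, where exactly this computation (the difference $w=g_2(\cdot,s)-g_1(\cdot,s)$ lies in $X_k$, the diagonal jumps cancel so that $T_n[M_1]w=-(M_2-M_1)g_2(\cdot,s)$ holds classically, and the representation through $g_1$ gives the sign) is carried out. The only point to tidy is that after observing $w\in C^{n-1}(I)$ you should note that the equation itself forces the one-sided limits of $w^{(n)}$ at $t=s$ to coincide, so $w\in C^n(I)$ and is a genuine classical solution.
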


In the sequel, we introduce two conditions on $g_M(t,s)$ that will be used along the paper.
\begin{itemize}
	\item[$(P_g$)] Suppose that there is a continuous function $\phi(t)>0$ for all $t\in (a,b)$ and $k_1,\ k_2\in \mathcal{L}^1(I)$, such that $0<k_1(s)<k_2(s)$ for a.e. $s\in I$, satisfying
	\[\phi(t)\,k_1(s)\leq g_M(t,s)\leq \phi(t)\, k_2(s)\,,\quad \text{for a. e. } (t,s)\in I \times I \,.\]
	\item[($N_g$)] Suppose that there is a continuous function $\phi(t)>0$ for all $t\in (a,b)$ and $k_1,\ k_2\in \mathcal{L}^1(I)$, such that $k_1(s)<k_2(s)<0$ for a.e. $s\in I$, satisfying
	\[\phi(t)\,k_1(s)\leq g_M(t,s)\leq \phi(t)\, k_2(s)\,,\quad \text{for a. e. }(t,s)\in I \times I\,.\]
\end{itemize}

Finally, we introduce the following sets, which are going to particularize $H_T$,
\begin{eqnarray*}
P_T&=& \left\lbrace M\in \mathbb{R}\,,\ \mid \quad g_M(t,s)\geq 0\quad \forall (t,s)\in I\times I\right\rbrace, \\
N_T&=& \left\lbrace M\in \mathbb{R}\,,\ \mid \quad g_M(t,s)\leq 0\quad \forall (t,s)\in I\times I\right\rbrace.
\end{eqnarray*}

Next results describe the structure of the two previous parameter's set.
\begin{theorem}\label{T::6} \cite[Lemma 1.8.33]{Cab}
	Let $\bar{M}\in\mathbb{R}$ be fixed. Suppose that operator $T_n[\bar{M}]$ is invertible on $X_k$, its related Green's function is non-negative on $I \times I$, it satisfies condition ($P_g$), and the set $P_T$ is bounded from above.
	Then $P_T=(\bar{M}-\lambda_1,\bar{M}-\bar{\mu}]$, with $\lambda_1>0$ the least positive eigenvalue of operator $T_n[\bar{M}]$ in $X_k$ and $\bar{\mu} \le 0$ such that $T_n[\bar{M}-\bar{\mu}]$ is invertible in $X_k$ and the related non-negative Green's function $g_{\bar{M}-\bar{\mu}}$ vanishes at some points on the square $I\times I$.
\end{theorem}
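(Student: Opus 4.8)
The plan is to pass to the equivalent spectral formulation and then to combine the Krein--Rutman theory of positive operators with the monotonicity supplied by Theorem \ref{T::d1}. Writing $M=\bar M-\lambda$, we have $T_n[M]=T_n[\bar M]-\lambda\,\mathrm{Id}$, so $\lambda$ is an eigenvalue of $T_n[\bar M]$ on $X_k$ precisely when $T_n[\bar M-\lambda]$ fails to be invertible. Since $T_n[\bar M]$ is invertible with non-negative Green's function $g_{\bar M}$, the solution operator $K:=T_n[\bar M]^{-1}$ is a compact integral operator whose kernel is $g_{\bar M}\ge 0$; condition $(P_g)$ says exactly that this kernel is squeezed between $\phi(t)\,k_1(s)$ and $\phi(t)\,k_2(s)$ with $k_1,k_2>0$, which makes $K$ strongly (i.e.\ $u_0$-) positive. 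Krein--Rutman / Krasnosel'ski\u{\i} then yields that the spectral radius $\rho=\rho(K)>0$ is a simple eigenvalue of $K$ with an eigenfunction $\varphi\in X_k$ that is strictly positive on $(a,b)$, and that no other eigenvalue admits a positive eigenfunction. Setting $\lambda_1:=1/\rho$ identifies the least positive eigenvalue of $T_n[\bar M]$ on $X_k$, since every other eigenvalue $\mu$ of $K$ satisfies $|\mu|\le\rho$, hence the corresponding $\lambda=1/\mu$ obeys $|\lambda|\ge\lambda_1$.

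First I would pin down the left endpoint. For $\lambda>\lambda_1$, that is $M<\bar M-\lambda_1$, I claim $T_n[M]$ is not inverse positive. Indeed $T_n[M]\varphi=T_n[\bar M]\varphi-\lambda\varphi=(\lambda_1-\lambda)\varphi$, so $T_n[M](-\varphi)=(\lambda-\lambda_1)\varphi\ge 0$ on $I$ while $-\varphi$ is not non-negative on $(a,b)$; by Definition \ref{d-IP} this contradicts inverse positivity, whence by Theorem \ref{T::in1} the function $g_M$ is not non-negative and $M\notin P_T$. At $M=\bar M-\lambda_1$ the operator is singular, so no Green's function exists there. Thus $\bar M-\lambda_1$ is a lower bound for $P_T$ that does not belong to it, yielding the open left endpoint.

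Next I would use monotonicity to obtain the interval structure and the right endpoint. On the largest parameter interval around $\bar M$ on which $T_n[M]$ remains invertible, the map $M\mapsto g_M(t,s)$ depends continuously on $M$, and by Theorem \ref{T::d1} it is non-increasing in $M$ wherever the sign is preserved. Since $g_{\bar M}\ge 0$ we have $\bar M\in P_T$, which together with the previous step shows that $P_T$ is an interval with left end $\bar M-\lambda_1$. As $P_T$ is assumed bounded above, set $\bar M-\bar\mu:=\sup P_T$; because $\bar M\in P_T$ we get $\bar M-\bar\mu\ge\bar M$, i.e.\ $\bar\mu\le 0$. Continuity forces $g_{\bar M-\bar\mu}\ge 0$, so the supremum is attained and the right endpoint is closed. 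This endpoint cannot be an eigenvalue of $T_n[\bar M]$, for otherwise no Green's function would exist there, contradicting both that the supremum is attained and that, via $(P_g)$ and monotonicity, $g_M\ge 0$ throughout $(\bar M-\lambda_1,\bar M-\bar\mu]$; hence $T_n[\bar M-\bar\mu]$ is invertible. Finally, if $g_{\bar M-\bar\mu}$ still satisfied $(P_g)$, i.e.\ stayed uniformly positive in the interior, continuity would let us increase $M$ slightly while preserving non-negativity, contradicting the definition of the supremum; therefore $g_{\bar M-\bar\mu}$ must vanish at some point of $I\times I$.

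The main obstacle is the strong-positivity step: verifying that condition $(P_g)$ genuinely delivers a $u_0$-positive compact operator, so that the Krein--Rutman conclusions apply in full, including simplicity of $\lambda_1$, strict positivity of $\varphi$ on $(a,b)$, and membership $\varphi\in X_k$. A secondary technical point is the continuous dependence of $g_M$ on $M$ along the invertibility interval together with the argument that no eigenvalue is crossed between $\bar M$ and $\bar M-\bar\mu$; this is precisely what guarantees that $P_T$ is a single interval rather than a union of pieces, and that the attained right endpoint is a regular, non-eigenvalue point at which the Green's function degenerates only by touching zero.
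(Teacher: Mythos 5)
The paper does not prove this statement: Theorem~\ref{T::6} is quoted verbatim from \cite[Lemma 1.8.33]{Cab}, so your attempt can only be measured against the standard argument in that reference, whose overall architecture (spectral radius of the compact positive solution operator $K=T_n[\bar M]^{-1}$, Krein--Rutman, monotonicity in $M$) you have correctly identified. Within that architecture, however, there is a genuine gap: you never actually prove that $g_M\ge 0$ for any $M\ne\bar M$. Every concrete step you carry out is negative (for $M<\bar M-\lambda_1$ the eigenfunction $\varphi$ violates inverse positivity; at $M=\bar M-\lambda_1$ the operator is singular), and the positive half --- that $(\bar M-\lambda_1,\bar M]\subseteq P_T$ and that $P_T$ is connected above $\bar M$ --- is attributed to ``continuity'' and to Theorem~\ref{T::d1}. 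But Theorem~\ref{T::d1} \emph{presupposes} that both Green's functions already have the same constant sign; it compares them, it cannot be used to establish that any $g_M$ has constant sign in the first place. The missing ingredient is the Neumann-series / resolvent identity: for $0<\lambda<\lambda_1=1/\rho(K)$ one has $T_n[\bar M-\lambda]^{-1}=\sum_{j\ge 0}\lambda^j K^{j+1}\ge 0$, whose kernel is $g_{\bar M-\lambda}$, and an analogous iteration (this is where $(P_g)$ is really used) propagates non-negativity between any two parameters already known to lie in $P_T$. Without some version of this, the claimed interval structure is unsupported.

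The right-endpoint analysis has the same character. Your argument that $\bar M-\bar\mu$ is not an eigenvalue is circular as written (``contradicting \dots that the supremum is attained'' assumes the conclusion); the honest argument is that for $\bar M\le M<\bar M-\bar\mu$ one has $0\le g_M\le g_{\bar M}$ by Theorem~\ref{T::d1}, so the resolvents are uniformly bounded, whereas they would blow up when approaching an eigenvalue. Likewise, ``continuity would let us increase $M$ slightly while preserving non-negativity'' is exactly the step that condition $(P_g)$ exists to justify: near the boundary of $I\times I$ the Green's function is small, so a naive uniform perturbation bound fails, and one must compare the increment $-(M-(\bar M-\bar\mu))\int g(\cdot,r)g(r,\cdot)\,dr$ against the lower envelope $\phi(t)k_1(s)$ using the upper envelope $\phi(t)k_2(s)$. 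You correctly flag this as ``the main obstacle,'' but flagging it is not the same as closing it; as it stands the proposal establishes only the exclusion $M\notin P_T$ for $M\le\bar M-\lambda_1$, not the equality $P_T=(\bar M-\lambda_1,\bar M-\bar\mu]$.
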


\begin{theorem}\label{T::7} \cite[Lemma 1.8.25]{Cab}
	Let $\bar{M}\in\mathbb{R}$ be fixed. Suppose that operator $T_n[\bar{M}]$ is invertible in $X_k$, its related Green's function is non-positive on $I\times I$, it satisfies condition ($N_g$), and the set $N_T$ is bounded from below.
	Then $N_T=[\bar{M}-\bar{\mu},\bar{M}-\lambda_1)$, with $\lambda_1<0$ the biggest negative eigenvalue of operator $T_n[\bar{M}]$ in $X_k$ and $\bar{\mu}\ge 0$ such that $T_n[\bar{M}-\bar{\mu}]$ is invertible in $X_k$ and the related non-positive Green's function $g_{\bar{M}-\bar{\mu}}$ vanishes at some points on the square $I\times I$.
\end{theorem}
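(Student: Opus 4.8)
The plan is to regard $\{g_M\}_M$ as a one-parameter family of kernels and to track how its sign degenerates as $M$ moves away from $\bar M$, using two facts as the backbone: the monotone dependence furnished by Theorem \ref{T::d1}, and the identification, via Theorem \ref{T::in1}, of ``non-positive Green's function'' with ``inverse negative operator''. It is convenient to reparametrize by writing $M=\bar M-\lambda$, so that $T_n[M]$ fails to be invertible exactly when $\lambda$ is an eigenvalue of $T_n[\bar M]$ on $X_k$; in particular $M>\bar M$ corresponds to $\lambda<0$, and the first singular value of $M$ met as $M$ increases from $\bar M$ is $\bar M-\lambda_1$, where $\lambda_1<0$ is the biggest negative eigenvalue. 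Wherever $T_n[M]$ is invertible the kernel $g_M$ depends continuously on $M$, and Theorem \ref{T::d1} gives that, as long as the sign does not change, increasing $M$ pushes $g_M$ downwards, i.e. makes it more negative. These are the only structural ingredients I would use; everything else is a continuation/connectedness argument.

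For the upper endpoint I would fix the interval $[\bar M,\bar M-\lambda_1)$, on which $T_n[M]$ is invertible, and prove that it is contained in $N_T$ by a clopen argument. The subset of $M$ in this interval with $g_M\le 0$ is nonempty, since it contains $\bar M$, and it is closed by continuity of $g_M$. For openness I would invoke condition $(N_g)$: the two-sided estimate $\phi(t)\,k_1(s)\le g_M(t,s)\le \phi(t)\,k_2(s)$ with $k_2<0$ forces $g_M$ to be strictly negative on $I\times(a,b)$, so a non-positive kernel cannot change sign under a small perturbation of $M$, whence $g_{M'}\le 0$ for $M'$ near $M$. Connectedness then yields $g_M\le 0$ throughout $[\bar M,\bar M-\lambda_1)$. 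The right endpoint must be excluded because $T_n[\bar M-\lambda_1]$ is singular and no Green's function exists there; one checks moreover that $g_M$ blows up as $M\uparrow\bar M-\lambda_1$, so the interval cannot close on the right. The deeper content, that the sign persists \emph{exactly} up to the eigenvalue and degenerates precisely there, is where the spectral picture enters: condition $(N_g)$ makes $-T_n[M]^{-1}$ a strongly positive compact operator, so by the Krein--Rutman theorem its spectral radius is a simple eigenvalue with a strictly positive eigenfunction, and it is this principal eigenvalue that diverges as $M\uparrow\bar M-\lambda_1$. I expect this identification to be the main obstacle and to consume most of the work.

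For the lower endpoint I would use the monotonicity in the opposite direction: decreasing $M$ makes $g_M$ increase towards $0$. Since $N_T$ is assumed bounded from below, set $\bar M-\bar\mu:=\inf N_T$ with $\bar\mu\ge 0$. By the closedness established above, $g_{\bar M-\bar\mu}\le 0$, so $\bar M-\bar\mu\in N_T$ and the left endpoint is included. In contrast with the upper endpoint, I would argue that this point is regular, i.e. $T_n[\bar M-\bar\mu]$ is still invertible: were it an eigenvalue, the clopen and strong-negativity argument would let $N_T$ be continued slightly past it, contradicting that it is the infimum. Finally $g_{\bar M-\bar\mu}$ must vanish at some points of $I\times I$, for otherwise the strict interior negativity coming from $(N_g)$ at $\bar M-\bar\mu$ would again permit a small further decrease of $M$ preserving $g_M\le 0$, once more contradicting the infimum. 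Collecting these three facts gives $N_T=[\bar M-\bar\mu,\bar M-\lambda_1)$ exactly as stated.
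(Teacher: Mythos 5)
First, a point of reference: the paper does not prove Theorem \ref{T::7} at all; it is imported verbatim from \cite[Lemma 1.8.25]{Cab}, so your proposal has to be judged on its own merits rather than against an in-paper argument. Your skeleton (reparametrize by $\lambda=\bar M-M$, use Theorem \ref{T::d1} for monotone dependence, Theorem \ref{T::in1} to translate signs into inverse negativity, and Krein--Rutman to identify the spectral threshold) is indeed the architecture of the cited proof. But as written there is a genuine gap at the step you lean on most heavily: openness of $\{M: g_M\le 0\}$. Sup-norm continuity of $M\mapsto g_M$ together with strict negativity of $g_M$ on $I\times(a,b)$ does \emph{not} prevent a small perturbation from creating positive values, because $g_M$ necessarily vanishes on part of the boundary of $I\times I$ (e.g.\ at $t=a$ and $t=b$), and a perturbation can push it above zero arbitrarily close to that zero set. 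This is precisely why condition $(N_g)$ is a \emph{two-sided} estimate $\phi\,k_1\le g_M\le \phi\,k_2<0$: one must work in the weighted cone determined by $\phi$ (equivalently, show $-T_n[M]^{-1}$ is $u_0$-positive and use a Neumann-series expansion $-T_n[M]^{-1}=\sum_{j\ge0}\mu^j(-T_n[\bar M]^{-1})^{j+1}$, $\mu=M-\bar M$, convergent and sign-preserving exactly for $\mu<-\lambda_1=1/r(-T_n[\bar M]^{-1})$). Via Theorem \ref{T::d1} you only transfer the \emph{upper} bound $g_M\le\phi\,k_2$ to larger $M$; you never establish the lower bound for the perturbed parameter, so the clopen argument does not close.

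Two further points. You explicitly defer the Krein--Rutman identification (``I expect this identification to be the main obstacle and to consume most of the work''), but that identification is needed not only for the blow-up as $M\uparrow\bar M-\lambda_1$ but also to exclude $M>\bar M-\lambda_1$ from $N_T$: non-invertibility removes only the single point $\bar M-\lambda_1$, and the clean way to kill larger $M$ is to test $T_n[M]$ against the strictly positive principal eigenfunction $u_1$ of $T_n[\bar M]$, since $T_n[M]u_1=(\lambda_1+M-\bar M)u_1>0$ while inverse negativity would force $u_1\le0$. Your argument never addresses this. Finally, at the left endpoint, your claim that invertibility of $T_n[\bar M-\bar\mu]$ follows because ``the clopen argument would let $N_T$ be continued slightly past it'' is circular (if it were an eigenvalue there is nothing to continue); the correct route is that, by Theorem \ref{T::d1}, $g_M$ is monotone and bounded between $g_{M_0}$ and $0$ as $M\downarrow\inf N_T$, which is incompatible with the resolvent blow-up at an eigenvalue, and the limit kernel is then the Green's function at $\bar M-\bar\mu$. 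The final assertion that $g_{\bar M-\bar\mu}$ must vanish somewhere again presupposes that $(N_g)$ holds at $\bar M-\bar\mu$, which is not given and is exactly what fails there.
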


\section{Main Result}

This section is devoted to prove the eigenvalue characterization of the sets $P_T$ and $N_T$. Such result is enunciated on the following Theorem

\begin{theorem}\label{T::5}
	Let $\bar{M}\in\mathbb{R}$ be such that equation $T_n[\bar{M}]\,u(t)=0$  is disconjugate on $I$. Then the two following properties are fulfilled:\\
	
If $n-k$ is even then the operator $T_n[M]$  is inverse positive on $X_k$ if, and only if, $M\in (\bar{M}-\lambda_1,\bar{M}-\lambda_2]$, where:
	\begin{itemize}
\item $\lambda_1>0$ is the least positive eigenvalue  of operator $T_n[\bar{M}]$ in $X_k$.
\item $\lambda_2<0$ is the maximum of:
\begin{itemize}
	\item $\lambda_2'<0$, the biggest negative eigenvalue  of operator $T_n[\bar{M}]$ in $X_{k-1}$.
	\item $\lambda_2''<0$, the biggest negative eigenvalue of operator $T_n[\bar{M}]$ in $X_{k+1}$.\\
\end{itemize}
	\end{itemize}

 If $n-k$ is odd then the operator $T_n[M]$  is inverse negative on $X_k$ if, and only if, $M\in [\bar{M}-\lambda_2,\bar{M}-\lambda_1)$, where:
\begin{itemize}
	\item $\lambda_1<0$ is the biggest negative eigenvalue of operator $T_n[\bar{M}]$ in $X_k$.
	\item $\lambda_2>0$ is the minimum of:
	\begin{itemize}
\item $\lambda_2'>0$, the least positive eigenvalue  of operator $T_n[\bar{M}]$ in $X_{k-1}$.
\item $\lambda_2''>0$, the least positive eigenvalue  of operator $T_n[\bar{M}]$ in $X_{k+1}$.
	\end{itemize}
\end{itemize}

\end{theorem}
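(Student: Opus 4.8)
The plan is to treat the two parities in parallel and, in each case, to reduce the statement to the structural description of $P_T$ (resp. $N_T$) furnished by Theorem~\ref{T::6} (resp. Theorem~\ref{T::7}), and then to identify the so-called regular extreme with a shifted eigenvalue. I will carry out the details for $n-k$ even, the case $n-k$ odd being entirely analogous after replacing Theorem~\ref{T::6} by Theorem~\ref{T::7}, non-negativity by non-positivity, condition $(P_g)$ by $(N_g)$, and the maximum by the minimum; the sign reversal comes from the sign of $p(t)=(t-a)^k(t-b)^{n-k}$, which is positive on $(a,b)$ when $n-k$ is even and negative when $n-k$ is odd.

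First I would verify the hypotheses of Theorem~\ref{T::6}. Since $T_n[\bar M]$ is disconjugate it has no nontrivial solution with $n$ zeros, hence none lying in $X_k$, so $T_n[\bar M]$ is invertible on $X_k$. By Lemma~\ref{L::1} and the positivity of $p$ on $(a,b)$ for $n-k$ even, $g_{\bar M}\ge 0$ on $I\times I$ and moreover $g_{\bar M}(t,s)/p(t)>0$ on $[a,b]\times(a,b)$; taking $\phi=p$, this strict two-sided bound yields condition $(P_g)$. The remaining hypothesis, that $P_T$ be bounded above, I would obtain by exhibiting, through the oscillation/spectral theory of the shifted problems, a value of $M$ at which $g_M$ must change sign, equivalently by producing the finite negative eigenvalues $\lambda_2',\lambda_2''$ below. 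Granting this, Theorem~\ref{T::6} gives $P_T=(\bar M-\lambda_1,\bar M-\bar\mu]$, with $\lambda_1>0$ the least positive eigenvalue of $T_n[\bar M]$ in $X_k$ (precisely the claimed left endpoint) and $\bar\mu\le 0$ a regular value at which $T_n[\bar M-\bar\mu]$ is still invertible in $X_k$ while the strict inequality $g/p>0$ fails at some $(t_0,s_0)\in[a,b]\times(a,b)$. By Theorem~\ref{T::in1}, inverse positivity on $X_k$ is equivalent to $g_M\ge 0$, so it only remains to prove $\bar\mu=\lambda_2:=\max\{\lambda_2',\lambda_2''\}$.

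The heart of the argument is to show that the first loss of the strict sign $g_M(t,s)/p(t)>0$, as $M$ increases from $\bar M$ (a monotone decrease of $g_M$ by Theorem~\ref{T::d1}, so that the sign, once lost, is not recovered), occurs exactly when $T_n[M]$ first admits a nontrivial solution satisfying the shifted conditions of $X_{k-1}$ or $X_{k+1}$. The failure of $g_{M^*}/p>0$ at $M^*=\bar M-\bar\mu$ splits into three configurations: an interior zero $g_{M^*}(t_0,s_0)=0$ with $t_0\in(a,b)$, an extra order of vanishing at $a$ (i.e. $\partial_t^{k}g_{M^*}(a,s_0)=0$), or an extra order of vanishing at $b$ (i.e. $\partial_t^{\,n-k}g_{M^*}(b,s_0)=0$). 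In each case, since $g_{M^*}\ge 0$ the zero is a minimum, so the corresponding first $t$-derivative vanishes as well, and using the symmetry $g^*_{M^*}(t,s)=g_{M^*}(s,t)$ from \eqref{Ec::gg} the same degeneracy holds in the $s$-variable for the adjoint Green's function. Combining the boundary vanishing already imposed by the $(k,n-k)$ conditions with this extra double (or higher-order) zero, I would assemble a nontrivial solution of $T_n[M^*]u=0$ carrying $n$ zeros distributed as $(k-1,n-k+1)$ or as $(k+1,n-k-1)$, that is, an eigenfunction in $X_{k-1}$ or in $X_{k+1}$; thus $\bar\mu$ is a negative eigenvalue of one of these two shifted problems. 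Conversely, for $\bar M<M<M^*$ no such solution exists, for otherwise $g_M$ would already have lost its strict sign, contradicting minimality; hence $\bar\mu$ is the largest negative value at which a shifted eigenfunction occurs. Since the maximum of $\lambda_2',\lambda_2''$ is the one closest to $0$, and closeness to $0$ corresponds to the smallest admissible $M>\bar M$ by the monotonicity of Theorem~\ref{T::d1}, this forces $\bar\mu=\max\{\lambda_2',\lambda_2''\}=\lambda_2$.

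The main obstacle I anticipate is precisely this identification: promoting the pointwise degeneracy of the non-negative Green's function $g_{M^*}$ (an interior double zero, or a raised order of vanishing at an endpoint) into a genuine \emph{global} solution of $T_n[M^*]u=0$ with exactly the shifted zero-count of $X_{k-1}$ or $X_{k+1}$, and correctly matching each configuration with the right shifted space. The difficulty is that $u=g_{M^*}(\cdot,s_0)$ has a corner at $s_0$ and is therefore not itself a solution on all of $I$, so the eigenfunction must be produced by exploiting the extra vanishing together with the adjoint symmetry \eqref{Ec::gg} and a limiting argument as $M\downarrow M^*$; verifying that no degeneration of a different type can occur earlier, so that $\bar\mu$ is genuinely the first and hence the maximal eigenvalue, is the other delicate point. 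The statement then follows from Theorem~\ref{T::in1}, the odd case being obtained \emph{verbatim} from Theorem~\ref{T::7} and condition $(N_g)$.
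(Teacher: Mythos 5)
Your reduction to Theorems \ref{T::6}, \ref{T::7} and \ref{T::in1} via Lemma \ref{L::1} is exactly the paper's starting point, and you correctly isolate the real task: identifying the regular extreme $\bar\mu$ with $\lambda_2=\max\{\lambda_2',\lambda_2''\}$. But the two steps that constitute the actual proof are left as announced obstacles rather than carried out, and one of them is set up in a way that would not work. First, the eigenfunction construction: the paper does not try to promote a degeneracy of $g_{M^*}(\cdot,s_0)$ at an \emph{interior} $s_0$ (where the corner you worry about is a genuine obstruction). Instead it differentiates the Green's function in $s$ and evaluates at the \emph{endpoints}: $v(t)=\partial_s^{n-k}g_M^2(t,s)\vert_{s=a}$ and $u(t)=\partial_s^{k}g_M^1(t,s)\vert_{s=b}$ are corner-free, genuine global solutions of $T_n[M]w=0$ on all of $I$. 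A careful computation with the matrix Green's function \eqref{MG1}, the jump conditions (G5) and the boundary matrices \eqref{Ec::Cf} shows $v$ satisfies $v(a)=\dots=v^{(k-2)}(a)=0$, $v(b)=\dots=v^{(n-k-1)}(b)=0$, $v^{(k-1)}(a)=(-1)^{n-k}$, so that $v$ acquires an $n$-th zero exactly when $v^{(n-k)}(b)=0$, i.e.\ at an eigenvalue in $X_{k-1}$; similarly $u$ detects $X_{k+1}$. This boundary-condition bookkeeping is the substantive content you would need to supply, and your limiting argument as $M\downarrow M^*$ is not a substitute for it.

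Second, your three-way case split mishandles the interior configuration. An interior double zero of $g_{M^*}(\cdot,s_0)$ at $t_0\in(a,b)$ does \emph{not} assemble into an eigenfunction of a $(k-1,n-k+1)$ or $(k+1,n-k-1)$ problem: the zeros would be distributed over three points (and the corner at $s_0$ again prevents $g_{M^*}(\cdot,s_0)$ from being a solution on all of $I$). The paper must therefore \emph{exclude} this configuration rather than absorb it, and it does so (Step 3) by using the P\'olya--Mammana factorization \eqref{e-descomp} guaranteed by disconjugacy: writing $T_n[\bar M]w_s=-\lambda w_s<0$ and iterating the quotient operators $T_j$, one bounds the number of zeros of $w_s=g_M(\cdot,s)$ by $n+2$ and then rules out the ``maximal oscillation'' needed for an interior double zero via the sign of $T_kw_s(a)$, which is pinned down by \eqref{e-Tk-a} and the parity of $n-k$. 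This same factorization is also what shows $v$ and $u$ cannot first change sign in the open interval (your proposal has no mechanism for this). Without these two ingredients the identification $\bar\mu=\lambda_2$ is not established, so the proposal as written has a genuine gap at its central step.
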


In order to prove this result, we separate the proof in several subsections.
 \subsection{Decomposition of operator $T_n[\bar{M}]$}\label{SC::1}
 
 We are interested into put operator $T_n[\bar{M}]$ as a composition of suitable operators of order $h \le n$. Such expression allow us to control the values of such operators at the extremes of the interval $a$ and $b$.  
 
We recall the following result proved in \cite[Chapter 3]{Cop}
 
\begin{theorem}\label{T::3}
	The linear differential equation (\ref{e-Ln}) has a Markov system of solutions if, and only if, the operator $T_n[M]$ has a representation
	\begin{equation}
	\label{e-descomp}
	T_n[M]\,y\equiv v_1 \,v_2\,\dots\,v_n \dfrac{d}{dt}\left( \dfrac{1}{v_n}\,\dfrac{d}{dt}\left( \cdots \dfrac{d}{dt}\left( \dfrac{1}{v_2} \dfrac{d}{dt}\left( \dfrac{1}{v_1}\,y\right) \right) \right) \right) \,,	
	\end{equation}
	where $v_k>0$  on $I$ and $v_k\in C^{n-k+1}(I)$ for $k=1,\dots,n$.
\end{theorem}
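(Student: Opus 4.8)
The statement is the classical Pólya--Mammana factorization theorem, and I would prove the two implications separately, relying throughout on the elementary fact that a monic $n^{\rm th}$-order linear operator is uniquely determined by its $n$-dimensional solution space (the operator being $y\mapsto W(u_1,\dots,u_n,y)/W(u_1,\dots,u_n)$, which is well defined and monic since $W(u_1,\dots,u_n)>0$).

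\emph{Sufficiency (factorization $\Rightarrow$ Markov system).} Assuming the representation \eqref{e-descomp}, I would introduce the quasi-derivatives associated with the factors: set $L_0 y=y/v_1$ and $L_j y=\frac{1}{v_{j+1}}(L_{j-1}y)'$ for $j=1,\dots,n-1$, so that \eqref{e-descomp} reads $T_n[M]y=v_1\cdots v_n\,(L_{n-1}y)'$. The natural basis is built by successive integration: $u_1=v_1$, and each $u_j$ defined by the iterated integral that forces $L_{j-1}u_j\equiv 1$ and $L_i u_j\equiv 0$ for $i\ge j$; these lie in the kernel of $T_n[M]$, are $n$ in number and linearly independent. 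I would then compute the Wronskians. Since each $L_j y$ is a combination of $y,\dots,y^{(j)}$ whose coefficient of $y^{(j)}$ equals $1/(v_1\cdots v_{j+1})$, the passage from $(y,\dots,y^{(k-1)})$ to $(L_0y,\dots,L_{k-1}y)$ is a lower-triangular matrix of determinant $1/(v_1^{k}v_2^{k-1}\cdots v_k)$. Applying it simultaneously to $u_1,\dots,u_k$ gives $W(u_1,\dots,u_k)=v_1^{k}v_2^{k-1}\cdots v_k\cdot\det(L_{i-1}u_j)_{i,j=1}^{k}$, and for the basis above the matrix $(L_{i-1}u_j)$ is upper-triangular with unit diagonal, so $W(u_1,\dots,u_k)=v_1^{k}v_2^{k-1}\cdots v_k>0$ on $I$, which is precisely the Markov property.

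\emph{Necessity (Markov system $\Rightarrow$ factorization).} Here I would argue by induction on $n$ via reduction of order. For $n=1$ a Markov system is a single positive solution $u_1$, and $T_1[M]y=u_1\,(y/u_1)'$ gives \eqref{e-descomp} with $v_1=u_1$. For the inductive step, given a Markov system $u_1,\dots,u_n$ (so all $W(u_1,\dots,u_k)>0$, in particular $u_1>0$), I would set $g_j=(u_{j+1}/u_1)'$ for $j=1,\dots,n-1$ and invoke the standard Wronskian identity $W(g_1,\dots,g_m)=W(u_1,\dots,u_{m+1})/u_1^{m+1}$, which shows $g_1,\dots,g_{n-1}$ is a Markov system of order $n-1$. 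By the induction hypothesis the monic operator $\widetilde T_{n-1}$ with kernel $\mathrm{span}(g_1,\dots,g_{n-1})$ factors as $\widetilde v_1\cdots\widetilde v_{n-1}\frac{d}{dt}(\frac{1}{\widetilde v_{n-1}}\cdots\frac{d}{dt}(\frac{1}{\widetilde v_1}\,\cdot\,))$ with the asserted positivity and regularity. Since $Py:=(y/u_1)'$ sends $u_1\mapsto 0$ and $u_{j+1}\mapsto g_j$, the order-$n$ operator $u_1\,\widetilde T_{n-1}\circ P$ annihilates all of $u_1,\dots,u_n$ and is monic; by the uniqueness fact it equals $T_n[M]$. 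Reading the composition from the inside out and setting $v_1=u_1$, $v_k=\widetilde v_{k-1}$ for $k\ge 2$ yields exactly \eqref{e-descomp}, with $v_1=u_1\in C^n(I)$, $v_k=\widetilde v_{k-1}\in C^{n-k+1}(I)$ and all $v_k>0$.

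The routine parts are the two Wronskian/linear-algebra identities. The genuine obstacle is the bookkeeping in the inductive step: one must confirm that $\widetilde T_{n-1}$ has coefficients of the regularity demanded by the induction hypothesis (which follows from $g_j\in C^{n-1}(I)$ together with the Wronskian formula for the operator), that the index shift $v_k=\widetilde v_{k-1}$ reproduces the smoothness class $C^{n-k+1}(I)$ exactly, and finally that the front factor $u_1\widetilde v_1\cdots\widetilde v_{n-1}$ collapses to $v_1\cdots v_n$ while $u_1(\widetilde T_{n-1}\circ P)$ is indeed monic of leading coefficient matching $T_n[M]$.
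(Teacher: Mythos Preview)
The paper does not prove this theorem: it is quoted verbatim as a known result from Coppel's monograph (\cite[Chapter 3]{Cop}) and no argument is given. Your proposal is correct and is precisely the classical P\'olya--Mammana proof that Coppel records---the sufficiency via quasi-derivatives and the Wronskian formula $W(u_1,\dots,u_k)=v_1^{k}v_2^{k-1}\cdots v_k$, and the necessity via reduction of order using $g_j=(u_{j+1}/u_1)'$ together with the identity $W(g_1,\dots,g_m)=W(u_1,\dots,u_{m+1})/u_1^{m+1}$---so there is no methodological difference to discuss.

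The only place worth a word of caution is the regularity bookkeeping you flag at the end. To close it cleanly you should note that the reduced operator $\widetilde T_{n-1}$ need not be constructed abstractly from its kernel: it can be obtained directly by dividing $T_n[M]y$ through by $u_1$ and applying the substitution $z=(y/u_1)'$, which expresses each coefficient $\tilde a_j$ explicitly as a differential polynomial in $u_1$ and the original $a_i$, whence $\tilde a_j\in C^{(n-1)-j}(I)$ follows from $u_1\in C^n(I)$ and $a_i\in C^{n-i}(I)$. With that in hand, the induction hypothesis applies verbatim and the index shift $v_k=\tilde v_{k-1}$ reproduces $v_k\in C^{n-k+1}(I)$ exactly as you state.
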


It is obvious that for any real parameter $M$, denoting $\lambda=M-\bar{M}$, we can rewrite operator $T_n[M]$ as follows:
	
	\[T_n[M] \,u(t)\equiv  T_n[\bar{M}]\,u(t)+\lambda\,u(t).\]
	
	If we assume that equation $T_n[\bar{M}] u(t)=0$ is disconjugate on $I$, because of Theorems \ref{T::4} and \ref{T::3}, we can express $T_n[\bar{M}]$ as
	
	\[T_n[\bar{M}]\,u(t)\equiv v_1(t)\,\dots\,v_n(t)\,T_n u(t)\,,\]
	where $T_k$ are built as
	\begin{equation}
	\label{e-Tk}
	T_0 u(t)=u(t)\,,\quad T_k u(t)=\dfrac{d}{dt}\left( \dfrac{1}{v_k(t)}\,T_{k-1} u(t)\right),\;  k =1, \ldots,n-1, \; t\in I,
	\end{equation}
	with $v_k>0$ on $I$, $v_k\in C^{n-k+1}(I)$, for $k=1,\dots,n$.\\
	
	Let us see now that $T_h\,u(t)$ is given as a linear combination of $u(t), u'(t),\dots, u^{(h)}(t)$ with the form
	\begin{equation}\label{Ec::7}
	T_h\,u(t)=\dfrac{1}{v_1(t)\dots v_h(t)}\,u^{(h)}(t)+p_{h_1}(t)\,u^{(h-1)}(t)+\cdots p_{h_h}(t)\,u(t)\,,
	\end{equation}
	where $p_{h_i}\in C^{n-h}(I)$.
	
	Indeed, we are going to prove this equality by induction.
	
	For $h=1$
	\begin{equation*}
	T_1\,u(t)=\dfrac{d}{dt}\left( \dfrac{1}{v_1(t)}\,u(t)\right) =\dfrac{1}{v_1(t)}\,u'(t)-\dfrac{v_1'(t)}{v_1^2(t)}\,u(t)\,.
	\end{equation*}
	
	Assume, by induction hypothesis, that equation (\ref{Ec::7}) is satisfied for some $h\in\left\lbrace 1,\dots,n-1\right\rbrace$ , therefore
{\footnotesize 	\begin{eqnarray}\nonumber
	T_{h+1}\,u(t)&=&\dfrac{d}{dt}\left( \dfrac{1}{v_{h+1}(t)}\left( \dfrac{1}{v_1(t)\dots v_h(t)}\,u^{(h)}(t)+p_{h_1}(t)\,u^{(h-1)}(t)+\cdots p_{h_h}(t)\,u(t)\right) \right) \\\nonumber\\\nonumber
	&=&\dfrac{d}{dt}\left(  \dfrac{1}{v_1(t)\dots v_{h+1}(t)}\,u^{(h)}(t)\right) +\dfrac{d}{dt}\left( \dfrac{1}{v_{h+1}(t)}\left(p_{h_1}(t)\,u^{(h-1)}(t)+\cdots p_{h_h}(t)\,u(t)\right) \right)\,,
	\end{eqnarray}}which, clearly has the form of equation (\ref{Ec::7}).
	
Finally, taking into account boundary conditions (\ref{e-k-n-k})  and the regularity of functions $p_{h_i}$, we conclude that 
	\[T_0u(a)=0\,,\dots,T_{k-1}u(a)=0\,,\ T_0u(b)=0\,,\dots, T_{n-k-1}(b)=0.\]

Moreover	
	\begin{eqnarray}
	\label{e-Tk-a}
T_k u(a)&=& \dfrac{1}{v_1(a)\dots v_k(a)}\,u^{(k)}(a)\,,\\\nonumber\\
\label{e-Tk-b}
T_{n-k} u(b) &=& \dfrac{1}{v_1(b)\dots v_{n-k}(b)}\,u^{(n-k)}(b)\,.
	\end{eqnarray}

	So, from the positiveness of $v_h$ on $I$, $h \in \{1, \dots,n\}$, we have that $T_k\,u(a)$ and $u^{(k)}(a)$ have the same sign. The same property holds for $T_{n-k}\,u(b)$ and $u^{(n-k)}(b)$.

\subsection{Expression of the matrix Green's function}

This subsection is devoted to express, as functions of $g_M(t,s)$, the functions $g_1(t,s), \dots, g_{n-1}(t,s)$, defined on \eqref{Ec:MG}, as the first row componentes of the Green's function of the vectorial system \eqref{Ec::vec}.

By studying the adjoint operator as in \cite[Section 1.3]{Cab}, we know that the related Green's function of the adjoint operator $G^*$ satisfies that $G^*(t,s)=G^T (s,t)$. Moreover, the following equality holds:

\begin{equation*}\label{Ec::3}\dfrac{\partial }{\partial t}\,\left( -G^*(t,s)\right) =-A^T (t)\, \left( -G^*(t,s)\right) \,, \quad t \in  I\backslash \left\lbrace s\right\rbrace.
\end{equation*}

So, we can transform previous equality in
{\footnotesize \begin{equation*}
\left( -\dfrac{\partial }{\partial t}\,G(s,t)\right) ^T=-\dfrac{\partial }{\partial t}\,G^T(s,t)=-A^T(t) \left( -G^T(s,t)\right)=A^T(t)\,G^T(s,t)=\left( G(s,t)\, A(t)\right)^T\,.
\end{equation*}}

Hence
\[\dfrac{\partial }{\partial t}\,G(s,t)= -G(s,t)\,A(t)\,,\]
or, which is the same,
\begin{equation}\label{Ec::4}\dfrac{\partial }{\partial s}\,G(t,s)= -G(t,s)\,A(s)\,.\end{equation}

Using this equality, we are going to prove by induction the following ones
\begin{equation}\label{Ec::6}
g_{n-j}(t,s)=(-1)^j\,\dfrac{\partial ^j }{\partial s^j}\,g_M(t,s)+\sum_{k=0}^{j-1} \alpha_k^j (s)\,\dfrac{\partial ^k }{\partial s^k}\,g_M(t,s),\;\; j=1, \ldots,n-1.
\end{equation}

Here $\alpha_i^j(s)$ are functions of $a_1(s)\,,\dots,a_j(s)$ and of its derivatives until order $(j-1)$ and  follow the recurrence formula

\begin{eqnarray}
\label{r2}
\alpha_k^{j+1}(s)&=&0\,,\quad k\geq j+1,\\
\label{r1}
\alpha_0^{j+1}(s)&=&a_{j+1}(s)-\left( \alpha_0^j\right) '(s), \quad j \ge 1,\\\label{r4}
\alpha_k^{j+1}(s)&=&-\left( \alpha_{k-1}^j(s)+\left( \alpha_k^j\right) '(s)\right), \quad 1\leq k\leq j.
\end{eqnarray}

Using equality (\ref{Ec::4}), we deduce that the Green's matrix' terms which are on position $(1,i)$, $i=1,\dots,n$, satisfy the following equality

\begin{equation}\label{Ec::5}
g_{i-1}(t,s)=-\dfrac{\partial }{\partial s}\,g_i(t,s)+a_{n-i+1}(s)\,g_M(t,s)\,,\quad i=2,\dots,n\,,
\end{equation}
where $g_M(t,s)\equiv g_n(t,s)$.

If we take $i=n$ in equation (\ref{Ec::5}) we deduce
\begin{equation*}
g_{n-1}(t,s)=-\dfrac{\partial }{\partial s}\,g_M(t,s)+a_1(s)\,g_M(t,s),
\end{equation*}
which give us  equation (\ref{Ec::6}) for $j=1$.

Assume now that equalities \eqref{Ec::6} -- \eqref{r4} are fulfilled for some $j\in\{1,\dots,n-2\}$ given. Let us see that they hold again for $j+1$.

\begin{eqnarray}
\nonumber
g_{n-j-1}(t,s)&=& -\dfrac{\partial}{\partial s}\left( (-1)^j \dfrac{\partial ^j }{\partial s^j}\,g_M(t,s)+\sum_{k=0}^{j-1}\alpha_k^j(s)\dfrac{\partial^k }{\partial s^k}\,g_M(t,s)\right) \\
\nonumber && +a_{j+1}(s)\,g_M(t,s)\\
\nonumber&=&a_{j+1}(s)\,g_M(t,s)+(-1)^{j+1} \dfrac{\partial^{j+1} }{\partial s^{j+1}}\,g_M(t,s)\\
\nonumber
&& -\sum_{k=0}^{j-1}\left( \alpha_k^j\right) '(s)\dfrac{\partial ^k }{\partial s^k}\,g_M(t,s) -\sum_{k=0}^{j-1}\alpha_k^j(s)\dfrac{\partial ^{k+1} }{\partial s^{k+1}}\,g_M(t,s)\\\nonumber
&=&(-1)^{j+1} \dfrac{\partial^{j+1} }{\partial s^{j+1}}\,g_M(t,s)+a_{j+1}(s)\,g_M(t,s)
\\
\nonumber
&&
-\sum_{k=0}^{j-1}\left( \alpha_k^j\right) '(s)\dfrac{\partial ^k}{\partial s^k}\, g_M(t,s) -\sum_{k=1}^{j}\alpha_{k-1}^j(s)\dfrac{\partial ^{k} }{\partial s^{k}}\,g_M(t,s)
\\\nonumber
&=& (-1)^{j+1} \dfrac{\partial^{j+1} }{\partial s^{j+1}}\,g_M(t,s)+\sum_{k=0}^j \alpha_k^{j+1}(s)\,\dfrac{\partial^k }{\partial s^k}\,g_M(t,s)\,.
\end{eqnarray}

Now, we can express Green's matrix related to problem (\ref{Ec::vec}), $G(t,s)$, as

\begin{equation}\label{MG1}
\scriptsize{\left( \begin{array}{ccc}
	(-1)^{n-1}\,\dfrac{\partial ^{n-1} }{\partial s^{n-1}}\,g_M(t,s)+\displaystyle \sum_{k=0}^{n-2}\alpha_k^{n-1}(s)\,\dfrac{\partial^k}{\partial s^k}\,g_M(t,s)&\cdots&g_M(t,s)\\&&\\
	(-1)^{n-1}\,\dfrac{\partial ^{n} }{\partial t\,\partial s^{n-1}}\,g_M(t,s)+\displaystyle \sum_{k=0}^{n-2}\alpha_k^{n-1}(s)\,\dfrac{\partial^{k+1}}{\partial t\,\partial s^k}\,g_M(t,s)&\cdots&\dfrac{\partial }{\partial t}\,g_M(t,s)\\&\cdots&\\
	\vdots&&\vdots\\&\cdots&\\
	(-1)^n\dfrac{\partial ^{2n-2} }{\partial t^{n-1}\partial s^{n-1}}\,g_M(t,s)+\displaystyle \sum_{k=0}^{n-2}\alpha_k^{n-1}(s)\dfrac{\partial^{n-1+k}}{\partial t^{n-1}\partial s^k}\,g_M(t,s)&\cdots&\dfrac{\partial^{n-1} }{\partial t^{n-1}}\,g_M(t,s)\end{array}\right) }\end{equation}

If coefficients $a_1(s),\dots,a_{n-1}(s),\, a_n(s)$ are constants, $a_1,\dots,a_{n-1},a_n$, we can solve explicitly the recurrence form \eqref{r2} -- \eqref{r4} and deduce that 
$$\alpha_k^j(s)=(-1)^k\,a_{j-k}.$$ 

So, we have that
\[g_{n-j}(t,s)=\sum_{k=0}^j(-1)^k\,a_{j-k}\,\dfrac{\partial^k}{\partial s^k}\, g_M(t,s)\,,\quad \text {with } a_0=1\,,\]
and we can rewrite $G(t,s)$ as
\[
\tiny{\left( \begin{array}{cccc}
	\displaystyle\sum_{k=0}^{n-1}(-1)^k\,a_{n-1-k}\,\dfrac{\partial^k}{\partial s^k}\,g_M(t,s)&\cdots&\displaystyle\sum_{k=0}^{1}(-1)^k\,a_{1-k}\,\dfrac{\partial^k}{\partial s^k}\,g_M(t,s)&g_M(t,s)\\&&&\\
	\displaystyle\sum_{k=0}^{n-1}(-1)^k\,a_{n-1-k}\,\dfrac{\partial^{k+1}}{\partial t\partial s^k}\,g_M(t,s)&\cdots& \displaystyle\sum_{k=0}^{1}(-1)^k\,a_{1-k}\,\dfrac{\partial^{k+1}}{\partial t\partial s^{k}}\,g_M(t,s)&\dfrac{\partial }{\partial t}\,g_M(t,s)\\&&&\\
	\vdots&&\vdots&\\&&&\\
	\displaystyle\sum_{i=0}^{n-1}(-1)^k\,a_{n-1-k}\,\dfrac{\partial^{n-1+k}}{\partial t^{n-1}\partial s^{k}}\,g_M(t,s)&\cdots&\displaystyle\sum_{k=0}^{1}(-1)^k\,a_{1-k}\,\dfrac{\partial^{n-1+k}}{\partial t^{n-1}\partial s^k}\,g_M(t,s)&\dfrac{\partial^{n-1} }{\partial t^{n-1}}\,g_M(t,s)\end{array}\right) }\,.\]

In particular, if $T_n[M]\,u(t)\equiv u^{(n)}(t)+M\,u(t)$ we conclude that 

\[g_{n-j}(t,s)=(-1)^j\,\dfrac{\partial ^j }{\partial s^j}\,g_M(t,s)\,,\]
so Green's matrix, $G(t,s)$, is given by expression

\[
\left( \begin{array}{cccc}
(-1)^{n-1}\,\dfrac{\partial^{n-1}}{\partial s^{n-1}}\,g_M(t,s)&\cdots&-\dfrac{\partial }{\partial s}\,g_M(t,s)&g_M(t,s)\\&&&\\
(-1)^{n-1}\,\dfrac{\partial^{n}}{\partial t\partial s^{n-1}}\,g_M(t,s)&\cdots& -\dfrac{\partial^2}{\partial t\partial s}\,g_M(t,s)&\dfrac{\partial }{\partial t}\,g_M(t,s)\\&&&\\
\vdots&&\vdots&\\&&&\\
(-1)^{n-1}\dfrac{\partial^{2\,n-2}}{\partial t^{n-1}\partial s^{n-1}}\,g_M(t,s)&\cdots&-\dfrac{\partial^{n}}{\partial t^{n-1}\partial s}\,g_M(t,s)&\dfrac{\partial^{n-1} }{\partial t^{n-1}}\,g_M(t,s)\end{array}\right)\,.\]

\begin{remark}
We note that in the general case it is possible to obtain some of the components of system \eqref{r2} -- \eqref{r4}.
\begin{eqnarray}
\nonumber
\alpha_0^j(s)&=&\sum_{i=0}^{j-1}(-1)^{i}\,a_{j-i}^{(i)}(s)\,,\\\nonumber\alpha_1^j(s)&=&\sum_{i=1}^{j-1}(-1)^{i}\,i\,a_{j-i}^{(i-1)}(s)\,,\\\nonumber \alpha_j^{j+1}(s)&=& (-1)^{j+1}\,a_1(s)\,. \end{eqnarray}
\end{remark}

\subsection{Proof of the main results}

Now we will proceed with the proof of the Main Theorem \ref{T::5}. To this end, we will divide the proof in several steps. 

First, we are going to show a previous lemma.
 
\begin{lemma}\label{L::2}
Let $\bar{M}\in\mathbb{R}$, such that $T_n[\bar{M}]\,u(t)=0$ is disconjugate on $I$. Then the following properties are fulfilled:
	\begin{itemize}
\item If $n-k$ is even, then $T_n[\bar{M}]$ is a inverse positive operator on $X_k$ and its related Green's function, $g_{\bar{M}}(t,s)$, satisfies ($P_g$).

\item If $n-k$ is odd, then $T_n[\bar{M}]$ is a inverse negative operator on $X_k$  and its related Green's function satisfies ($N_g$).

	\end{itemize}
\end{lemma}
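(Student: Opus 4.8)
The plan is to read off the sign of $g_{\bar M}$ directly from Lemma \ref{L::1}, and then to build the separated bounds demanded by $(P_g)$ and $(N_g)$ by normalizing $g_{\bar M}$ by a single function of $t$ and taking the extrema in $t$.

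First I would settle the inverse positivity/negativity. With $p(t)=(t-a)^k(t-b)^{n-k}$, on the open interval $(a,b)$ one has $(t-a)^k>0$ and $\operatorname{sgn}\big((t-b)^{n-k}\big)=(-1)^{n-k}$, so $\operatorname{sgn} p(t)=(-1)^{n-k}$ there. Since $T_n[\bar M]\,u=0$ is disconjugate, the first inequality in Lemma \ref{L::1} gives $g_{\bar M}(t,s)\,p(t)\ge 0$ on $I\times I$; hence $g_{\bar M}\ge 0$ when $n-k$ is even and $g_{\bar M}\le 0$ when $n-k$ is odd, the boundary points $t\in\{a,b\}$ or $s\in\{a,b\}$ causing no trouble since there the boundary conditions \eqref{e-k-n-k} force $g_{\bar M}=0$. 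Theorem \ref{T::in1} then converts these signs into the inverse positivity ($n-k$ even) and inverse negativity ($n-k$ odd) of $T_n[\bar M]$ on $X_k$.

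For the two conditions I would set $\phi(t)=(-1)^{n-k}\,p(t)=(t-a)^k(b-t)^{n-k}$, which is continuous on $I$ and strictly positive on $(a,b)$, and study the normalized quotient $R(t,s):=g_{\bar M}(t,s)/\phi(t)=(-1)^{n-k}\,g_{\bar M}(t,s)/p(t)$. The second inequality in Lemma \ref{L::1} says exactly that $R(t,s)>0$ on $[a,b]\times(a,b)$, the quotient being understood through its positive continuous extension up to $t=a$ and $t=b$; thus, for every fixed $s\in(a,b)$, the map $t\mapsto R(t,s)$ is continuous and strictly positive on the compact interval $[a,b]$, so that $m(s):=\min_{t\in[a,b]}R(t,s)$ and $M(s):=\max_{t\in[a,b]}R(t,s)$ obey $0<m(s)\le M(s)<\infty$. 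In the even case I would then take $k_1(s)=\tfrac12\,m(s)$ and $k_2(s)=2\,M(s)$; these satisfy $0<k_1(s)<k_2(s)$ and, multiplying $m(s)\le R(t,s)\le M(s)$ by $\phi(t)>0$, also $\phi(t)\,k_1(s)\le g_{\bar M}(t,s)\le \phi(t)\,k_2(s)$, which is $(P_g)$. In the odd case $R<0$, and the choice $k_1(s)=2\,m(s)$, $k_2(s)=\tfrac12\,M(s)$ yields $k_1(s)<k_2(s)<0$ together with the same two-sided estimate, i.e. $(N_g)$.

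The one genuinely delicate point — and the step I expect to be the main obstacle — is the integrability $k_1,k_2\in\mathcal L^1(I)$, which reduces to showing that $R$ is bounded on $[a,b]\times(a,b)$ (so that $m$ and $M$ are bounded measurable functions on the bounded interval $I$). Away from the corners this is immediate, since $g_{\bar M}$ is continuous on $I\times I$ and $\phi$ is bounded below on compact $t$-subintervals of $(a,b)$. Near the corners $(a,a)$ and $(b,b)$, where $t$ and $s$ approach the same endpoint, I would control the numerator by means of the adjoint relation \eqref{Ec::gg}, $g^*_{\bar M}(t,s)=g_{\bar M}(s,t)$, together with the identification $D(T_n^*[\bar M])=X_{n-k}$ established above: these force $g_{\bar M}(t,\cdot)$ to vanish to order $n-k$ at $s=a$ and to order $k$ at $s=b$, so that the numerator of $R$ decays at least as fast as $\phi$ degenerates and $R$ stays bounded (indeed tends to $0$) at the corners. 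Once this corner estimate is in hand, $k_1$ and $k_2$ are bounded and measurable, hence lie in $\mathcal L^1(I)$, and the verification of $(P_g)$ and $(N_g)$ is complete.
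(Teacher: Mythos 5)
Your argument is correct and follows essentially the same route as the paper's proof: both read the sign of $g_{\bar M}$ from Lemma \ref{L::1}, set $\phi=\pm p$, and obtain $k_1,k_2$ as the minimum and maximum in $t$ of the quotient $g_{\bar M}(t,s)/p(t)$, extended continuously to $t=a$ and $t=b$ via the strictly positive limits that Lemma \ref{L::1} provides. Your extra care about strictness ($k_1<k_2$, secured by the factors $\tfrac12$ and $2$) and about the integrability of $k_1,k_2$ near the corners only supplies detail that the paper leaves implicit.
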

\begin{proof}
By Lemma \ref{L::1} we have that for all $s\in(a,b)$
	\begin{eqnarray}\nonumber \exists \lim_{t\rightarrow a^+} \dfrac{g_{\bar{M}} (t,s) }{p(t)}&=&\ell_1(s) >0,\\\nonumber\\
	\nonumber \exists \lim_{t\rightarrow b^-} \dfrac{g_{\bar{M}} (t,s) }{p(t)}&=&\ell_2(s) > 0,
	\end{eqnarray}
	so, for each $s\in(a,b)$, we have that function $
\dfrac{g_{\bar{M}}(t,s)}{p(t)}$ is a strictly positive and continuous function in $I$, thus
\begin{equation}\label{Ec::pg}
0<k_1(s)=\min_{t\in I} \dfrac{g_{\bar{M}}(t,s)}{p(t)}< \max_{t\in I} \dfrac{g_{\bar{M}}(t,s)}{p(t)}=k_2(s)\,,\quad s\in (a,b)\,.  \end{equation}

Since $g_M$ is a continuous function, we have that $k_1$ and $k_2$ are continuous functions too.

If $n-k$ is even, we take $\phi(t)=p(t)$ and condition $(P_g)$ is trivially fulfilled.

If $n-k$ is odd, we take $\phi(t)=-p(t)$ and multiplying equation \eqref{Ec::pg} by $-1$, condition $(N_g)$ holds immediately.	
\end{proof}

	First, notice that, as a direct corollary of the previous Lemma the assertion for $\lambda_1$ in Theorem \ref{T::5} follows directly from Theorems \ref{T::6} and \ref{T::7}.\\
	
	Now, we are going to prove the assertion in Theorem \ref{T::5} concerning $\lambda_2$. 
	
	The proof will be done in several steps. In a first moment we will show that, if $n-k$ is even, the Green's function changes sign for all $M>\bar{M}-\lambda_2$ and for all $M<\bar{M}-\lambda_2$ when $n-k$ is odd.
	
	After that we will prove that such estimation is optimal in both situations.
	
	In order to make the paper more readable, along the proofs of this subsection it will be assumed that $n-k$ is even.
	The arguments with $n-k$ odd will be pointed out at the end of the subsection.
	
\begin{itemize}
	\item [Step 1.] \textit{Behavior of Green's function on a neighborhood of $s=a$ and $s=b$.}
\end{itemize}

First, we construct two functions that will characterize the values of $M\in\mathbb{R}$ for which Green's function  oscillates, or not, on a neighborhood of $s=a$ and $s=b$.

	In order to do that, we denote Green's function related to problem (\ref{e-Ln})-(\ref{e-k-n-k}) as follows
		
	\[g_M(t,s)=\left\lbrace \begin{array} {lcr}
	g_M^1(t,s),&&a\leq t<s\leq b,\\&&\\
	g_M^2(t,s),&&a\leq s\leq t\leq b.\end{array}\right. \,\]
	
	Since $g_M(t,s)$ is a Green's function, it is satisfied that
	\[T_n[M]\,g_M(t,s)=0\,,\quad t\in[a,b]\,,\quad t\neq s\,,\]
	where $g_M(t,s)$ is acting as a function of $t$. 
	
	Therefore, differentiating the previous expression, we deduce that
	\begin{equation}\label{Ec::8}T_n[M] \left( \dfrac{\partial^h g_M(t,s)}{\partial s^h}\right) =\dfrac{\partial^h}{\partial s^h}\left( T_n[M]\,g_M(t,s)\right) =0,\; h=0,\dots,n-1,\, t\neq s\,.\end{equation}
	
	In particular,  we can define the functions
	\begin{eqnarray}
	\label{d-u}
	u(t)&=&\dfrac{\partial ^k}{\partial s^k}g_M^1(t,s)_{\mid_{s=b}}\equiv {g_M^1}_{s^k}(t,b)\,,\quad t\in I\,,\\\nonumber&&\\
	\label{d-v}
	v(t)&=&\dfrac{\partial ^{n-k}}{\partial s^{n-k}}g_M^2(t,s)_{\mid_{s=a}}\equiv {g_M^2}_{s^{n-k}}(t,a)\,,\quad t\in I\,.
	\end{eqnarray}
	
	Because of the relation between $g_M(t,s)$ and $g^*_M(t,s)$, shown in (\ref{Ec::gg}), and taking into account the boundary conditions of the adjoint operator, it is not difficult to deduce that 
	\begin{eqnarray}\nonumber{g_M^2}_{s^{h}}(t,a)&=&{g^{*\,1}_M}_{t^{h}}(a,s)=0,\quad 0\leq h\leq n-k-1,\\\nonumber{g_M^1}_{s^\ell}(t,b)&=&{g^{*\,2}_M}_{t^{\ell}}(b,s)=0\,,\quad 0\leq\ell\leq k-1\,.\end{eqnarray}
	 
	 So, we are interested in to know the values of $M$ for which functions $u(t)$ and $v(t)$ oscillate on $I$. Such property guarantees that Green's function oscillates on a neighborhood of  $s=a$ or $s=b$ for such values. Moreover it provides a higher bound  for the set of parameters where Green's function does not oscillate.
	
	\begin{itemize}
\item[Step 1.1.] \textit{Boundary conditions of $v(t)$.}
	\end{itemize}
	Because of equality (\ref{Ec::8}) we know that $T_n[M] v(t)=0$ on $I$. In this step we are going to see which boundary conditions satisfies function $v$.
	
	We have that $G(t,s)$ as it appears on (\ref{MG1}) is Green's matrix related to vectorial problem (\ref{Ec::vec}). 
	Using the expressions of matrices $B$ and $C$  given by (\ref{Ec::Cf}), if we consider first row of resultant matrix, we obtain for $s\in(a,b)$ the following expression

	\[\left\lbrace \begin{array}{l}
	g_M^1(a,s)=0,\\\\
	-{g_M^1}_s(a,s)+\alpha_0^1(s)\,g_M^1(a,s)=0,\\\\
	\hspace{1cm}\vdots\\\\
	(-1)^{n-k}{g_M^1}_{s^{n-k}}(a,s)+\displaystyle \sum_{i=0}^{n-k-1} \alpha_i^{n-k}(s){g_M^1}_{s^i}(a,s)=0 \,.\end{array} \right.\]
	
	Thus, while $k>1$, none of the previous elements belongs to the diagonal of the matrix Green's function. Since it has discontinuities only at its diagonal entries, see Definition \ref{Def::G}, by considering the limit of $s$ to $a$, we deduce that the previous equalities hold for $g_M^2(a,a)$, i.e. :
	
	\[\left\lbrace \begin{array}{l}
	g_M^2(a,a)=0,\\\\
	-{g_M^2}_s(a,a)+\alpha_0^1(a)\,g_M^2(a,a)=0,\\\\
	\hspace{1cm}\vdots\\\\
	(-1)^{n-k}{g_M^2}_{s^{n-k}}(a,a)+\displaystyle \sum_{i=0}^{n-k-1} \alpha_i^{n-k}(a){g_M^2}_{s^i}(a,a)=0 \,,\end{array} \right.\]
	so, we conclude that
	\[g_M^2(a,a)={g_M^2}_s(a,a)=\cdots={g_M^2}_{s^{n-k}}(a,a)=0\,,\]
hence $v(a)=0$.

	Analogously, since we do not reach any diagonal element, we deduce that $v'(a)=\cdots=v^{(k-2)}(a)=0$. 
	
	Let us see what happens for  $v^{(k-1)}(a)$ with $k>1$. We arrive at the following system written as a function of $g_M^1(t,s)$:

	\[\left\lbrace \begin{array}{l}
	 {g_M^1}_{t^{k-1}}(a,s)=0,\\\\
	- {g_M^1}_{t^{k-1}\,s}(a,s)+\alpha_0^1(s)\,{g_M^1}_{t^{k-1}}(a,s)=0,\\\\
	\hspace{1cm}\vdots\\\\
	(-1)^{n-k} {g_M^1}_{t^{k-1}\,s^{n-k}}(a,s)+\displaystyle \sum_{i=0}^{n-k-1} \alpha_i^{n-k}(s){g_M^1}_{t^{k-1}\,s^i}(a,s)=0 \,.\end{array} \right.\]
	
	This system remains true for $s=a$, and because of the continuity of Green's matrix at  $t=s$ on the non-diagonal elements and the break which is  produced on its diagonal,  we arrive at the following system for   $g_M^2(a,a)$:
	\[\left\lbrace \begin{array}{l}
{g_M^2}_{t^{k-1}}(a,a)=0,\\\\
	-{g_M^2}_{t^{k-1}\,s}(a,a)+\alpha_0^1(a)\,{g_M^2}_{t^{k-1}}(a,a)=0,\\\\
	\hspace{1cm}\vdots\\\\
	(-1)^{n-k} {g_M^2}_{t^{k-1}\,s^{n-k}}(a,a)+\displaystyle \sum_{i=0}^{n-k-1} \alpha_i^{n-k}(a){g_M^2}_{t^{k-1}\,s^i}(a,a)=1,\end{array} \right.\]
hence 
	\[{g_M^2}_{t^{k-1}}(a,a)=\cdots={g_M^2}_{t^{k-1}\,s^{n-k-1}}(a,a)=0\,,\]
	and
	\[v^{(k-1)}(a)= {g_M^2}_{t^{k-1}\,s^{n-k}}(a,a)=(-1)^{n-k}\,.\]
	
	Obviously, taking $k=1$, the same argument tell us that $v(a)=(-1)^{n-1}$.\\

To see the boundary conditions at $t=b$, we have the following system for $s\in(a,b)$, written as a function of $g_M^2(t,s)$
	
	\[\left\lbrace \begin{array}{l}
	g_M^2(b,s)=0,\\\\
	-{g_M^2}_s(b,s)+\alpha_0^1(s)\,g_M^2(b,s)=0,\\\\
	\hspace{1cm}\vdots\\\\
	(-1)^{n-k} {g_M^2}_{s^{n-k}}(b,s)+\displaystyle \sum_{i=0}^{n-k-1} \alpha_i^{n-k}(s) {g_M^2}_{s^i}(b,s)=0,\end{array} \right.\]
	hence
	\[g_M^2(b,s)=\cdots= {g_M^2}_{s^{n-k}}(b,s)=0\,.\]

By continuity, this is satisfied at $s=a$, so 
	\[v(b)= {g_M^2}_{s^{n-k}}(b,a)=0\,.\]
	
Using \eqref{MG1} and \eqref{Ec::Cf}, since there is no jump in this case, it is immediate to verify that $v'(b)=\cdots=v^{(n-k-1)}(b)=0$.
	
As consequence $v$ is the unique solution of the following problem, which we denote as $(P_v)$:
\begin{eqnarray}
\nonumber T_n[M]\,v(t)&=&0\,,\quad t\in I\,,\\\nonumber
v(a)&=&\dots = v^{(k-2)}(a)=0,\\\nonumber
v(b)&=&\dots = v^{(n-k-1)}(b)=0\,,\\\nonumber
 v^{(k-1)}(a)&=&(-1)^{n-k}\,.
 \end{eqnarray}

\begin{remark}
\label{re-noIN}
We note that, to attain the previous expression, we have not used any disconjugacy hypotheses on operator $T_n[M]$. Moreover the proof is valid for $n-k$ even or odd. In other works, function $v$ solves problem $(P_v)$ for any linear operator defined in \eqref{e-Ln} and any $k\in \{1, \ldots,n-2\}$.
\end{remark}

We know, because of $g_{\bar M}(t,s)$ is of constant sign on $I \times I$ (see Lemma \ref{L::2}), that if $M=\bar{M}$ function $v$ must be of constant sign in $I$.

\begin{itemize}
	\item[Step 1.2.] \textit{If $v$ is of constant sign in $I$ then it can not have any zero in $(a,b)$.}
\end{itemize}
We are now going to see that while $v(t)$ is of constant sign in $I$ it can not have any zero in $(a,b)$. So the sign change comes on at $t=a$ or $t=b$. 

In order to do that, we are going to consider the decomposition of operator $T_n[M]$ made in Subsection \ref{SC::1}.
	
	Since $n-k$ is even, using Lemma \ref{L::2}, we know that operator $T_n[\bar{M}+\lambda] $ is, for $\lambda=0$, inverse positive on $X_k$. So, the characterization of $\lambda<0$ follows from Theorem \ref{T::6}.
		
	For $\lambda>0$,  $v\in C^n(I)$ is a solution of a linear differential equation, hence it is only allowed to have a finite number of zeros on $I$. Therefore, if  $v(t)\geq 0$, we have  that $v(t)>0$ for all $t\in I\backslash \left\lbrace t_0,\dots,t_\ell\right\rbrace $. In particular $v(t)>0$ for a.e. $t\in I$. Thus
	
	\begin{equation}\label{Ec::lll}
T_n[\bar{M}]\,v(t)=-\lambda\,v(t)<0\,\ \text{for a.e. } t\in I\,.\end{equation}

As we have shown in Subsection \ref{SC::1}, we know that
	\[T_n[\bar{M}]\,v(t)=v_1(t)\dots v_n(t)\,\dfrac{d}{dt}\left( \dfrac{1}{v_n(t)}\,T_{n-1}\,v(t)\right).\]

Since for every $k=1,\dots,n$, $v_k\in C^{n-k+1}(I)$ and $v_k(t)>0$ on $I$, we conclude that
 $\dfrac{1}{v_n(t)}\,T_{n-1}\,v(t)$ must be decreasing on $I$.
	
Therefore, since $v_n(t)>0$ on $I$ we have that $T_{n-1}v(t)$ can vanish at most once in $I$.
	
Arguing by recurrence, we have that $T_0\,v(t)=v(t)$ can have at most $n$ zeros on $I$ (multiple zeros being counted according to their multiplicity) while $v(t)\geq 0$. 

On the other hand, because of the boundary conditions \eqref{e-k-n-k}, we know  that $v$ vanishes $n-1$ times on $a$ and $b$, hence it can not have a double zero on $(a,b)$. This implies that sign change can not come from $(a,b)$.

\begin{itemize}
	\item[Step 1.3]\textit{Change sign of $v$ at $t=a$ and $t=b$.}
\end{itemize}
	
We are now going to see that the sign change cannot come from a neighborhood of $t=a$.

	Since $n-k$ is even, as we have proved before,  $v^{(k-1)}(a)>0$ for all $M\in \mathbb{R}$, which implies, since $v(a)=\dots =v^{(k-2)}(a)=0$, that  $v(t)={g^2_M}_{s^{n-k}}(t,a)$ is always positive on a neighborhood of $t=a$. This allows us to affirm that  Green's function, $g_M(t,s)$, is positive on a neighborhood of $(a,a)$.
	
	Using Step 1.2, we have that $v$ will keep constant sign on $I$  while  $v^{(n-k)}(b)=0$ is not satisfied, i.e.,  while an eigenvalue of  $T_n[\bar{M}]$ on $X_{k-1}$ is not attained.
	
Or equivalently, if $M\in[\bar{M},\bar{M}-\lambda_2']$ then $g_M(t,s)$ remains positive on a right neighborhood of $s=a$. Moreover, by Theorem \ref{T::6}, we deduce that  $g_M(t,s)$ oscillates in $I \times I$ for all $M>\bar{M}-\lambda_2'$.

	\begin{itemize}
\item[Step 1.4.] \textit{Study of function $u$.}

	\end{itemize}
	
	In order to analyse the behaviour of the Green's function on a left neighborhood of  $s=b$, we work now with the function $u$ defined in \eqref{d-u}.

	Using the same arguments than of $v$, we conclude that $u$ is the unique solution of the following problem, which we denote as $(P_u)$:
\begin{eqnarray}
\nonumber T_n[M]\,u(t)&=&0\,,\quad t\in I\,,\\\nonumber
u(a)&=&\dots = u^{(k-1)}(a)=0,\\\nonumber
u(b)&=&\dots = u^{(n-k-2)}(b)=0\,,\\\nonumber
u^{(n-k-1)}(b)&=&(-1)^{k-1}\,.
 \end{eqnarray}
	
As in Remark \ref{re-noIN}, we have that this property does not depend either on the disconjugacy of operator $T_n[M]$ nor if $n-k$ is even or odd.

	Using  analogous arguments to the ones done with $v$, we can prove that sign change cannot come on the open interval $(a,b)$
	
	Moreover, from condition $u^{(n-k-1)}(b)=(-1)^{k-1}$, sign change of $u$ cannot appear on $b$.
	
	So $u$ is of constant sign in $I$ until $u^{(k)}(a)=0$ is  verified, i.e., while an eigenvalue of $T_n[\bar{M}]$ on $X_{k+1}$ does not exist. Or, equivalently, while $M\in[\bar{M},\bar{M}-\lambda_2'']$.  
	
	Thus we have that if $M$ is on that interval, Green's function $g_M(t,s)$ has constant sign on a left neighborhood of $s=b$, but once $M>\bar{M}-\lambda_2''$ Green's function oscillates in $I \times I$.\\
		
		As a consequence of Step 1, we deduce that interval $(\bar{M}-\lambda_1,\bar{M}-\lambda_2]$  cannot be enlarged. Moreover we have also proved that the Green's function has constant sign on a neighborhood of $s=a$ and of $s=b$ for all $M$ in such interval.
		
\begin{itemize}
	\item[Step 2.] \textit{Behavior of Green's function on a neighborhood of $t=a$ and $t=b$.}
\end{itemize}	
	
	Now, let us see what happens on a neighborhood of $t=a$ and $t=b$. In order to do that, we are going to use the  operator $\widehat{T}_n[(-1)^n\,\bar{M}]$ defined in \eqref{Ec::Tg} and the relation between $g_M(t,s)$ and $\widehat{g}_{(-1)^n\,M}(t,s)$ given in (\ref{Ec::gg1}).
	
	Arguing as in Step 1, we will obtain the values of the real parameter $M$ for which $\widehat{g}_{(-1)^n\,M}(t,s)$ is of constant sign on a neighborhood of $s=a$ and  $s=b$. Once we have done it, we will be able to apply such property to the behaviour of $g_M(t,s)$ on a neighborhood of $t=a$ or $t=b$.
	
The analogous problem for operator $\widehat{T}_n[(-1)^n\,M]$ related to problem (\ref{e-Ln})-(\ref{e-k-n-k}) is given by
	\[\left\lbrace \begin{array}{l}
	\widehat{T}_n[(-1)^n\, M] v(t)=0, \quad t \in I\,,\\\\
	v(a)=\cdots=v^{(n-k-1)}(a)=0\,,\\\\
	v(b)=\cdots=v^{(k-1)}(b)=0\,.\end{array}\right. \]
	
	Theorem \ref{T::2} implies that equation $T_n^*[\bar{M}]\,u(t)=0$ is disconjugate on $I$. So, the same holds with $\widehat{T}_n[(-1)^n\,\bar{M}]\,u(t)=0$. Reasoning as in Step 1, we are able to prove that $\widehat{g}_{(-1)^n\,M}(t,s)$ has constant sign on a neighborhood of  $s=a$, while an eigenvalue of $\widehat{T}_n[(-1)^n\,\bar{M}]$ on $X_{n-k-1}$, let it be denoted as $\widehat{\lambda}_2''$, is not attained.
	
	This fact is equivalent to the existence of an eigenvalue of $T^*_n[\bar{M}]$ on $X_{n-k-1}$, that will be $(-1)^n \widehat{\lambda}_2''$. Now, using the fact that the real eigenvalues of an operator coincide with those of the adjoint operator, we conclude that $\lambda_2''=(-1)^n \widehat{\lambda}_2''$ is the biggest negative eigenvalue of   $T_n[\bar{M}]$ on $X_{n-(n-k-1)}=X_{k+1}$  and $\widehat{g}_{(-1)^n\,M}(t,s)$ is of constant sign on a right neighborhood of $s=a$ while $M\in[\bar{M},\bar{M}-\lambda_2'']$. So, Green's function of problem (\ref{e-Ln})-(\ref{e-k-n-k}), $g_M(t,s)$, does not oscillate on a right neighborhood of $t=a$.
	
	Analogously, arguing as before, we know that $\widehat{g}_{(-1)^n\,M}(t,s)$ does not oscillate on a left neighborhood of $s=b$ while an eigenvalue of $\widehat{T}_n[M]$ on $X_{n-k+1}$ is not attained, which is equivalent to the existence of an eigenvalue of $T_n[M]$ on $X_{k-1}$. If $M\in[\bar{M},\bar{M}-\lambda_2']$ we can affirm that Green's function of operator  $\widehat{T}_n[(-1)^n\,M]$, $\widehat{g}_{(-1)^n\,M}(t,s)$, will not oscillate on a left neighborhood of $s=b$, as consequence Green's function of problem (\ref{e-Ln})-(\ref{e-k-n-k}), $g_M(t,s)$, will not oscillate on a left neighborhood of $t=b$.\\
	
	As a consequence of the two previous Steps, we have already proved that if $M\in [\bar{M},\bar{M}-\lambda_2]$ then Green's function remains of constant sign on a neighborhood of the boundary of $I \times I$. And if $M>\bar{M}-\lambda_2$  Green's function oscillates on $I \times I$.
	
	\begin{itemize}
\item[Step 3.] \textit{The Green's function does not  become to change sign on $(a,b)\times (a,b)$.}
	\end{itemize}

	In this Step  we will prove that the oscillation of Green's function related to problem (\ref{e-Ln})-(\ref{e-k-n-k}) must begin on the boundary of $I \times I$. Using Theorem \ref{T::d1} we have that, provided it has non-negative sign on $I\times I$, $g_M$ decreases in $M$. 
	
	As consequence, once we prove that $g_M$ cannot have a double zero on $(a,b)\times (a,b)$, the change of sign must start on the boundary of $I\times I$.\\
	
	Let us see that if $g_M(t,s)\geq 0$ in $I\times I$ then $g_M(t,s)>0$ in $(a,b)\times (a,b)$.
	
	Denote, for a fixed $s\in (a,b)$, $w_s(t)=g_M(t,s)$. By definition, denoting, as in Step 1, $\lambda=M -\bar{M}$, we have that 
		\[T_n[\bar{M}]\,w_s(t)+\lambda\,w_s(t)=0\,,\quad t\in I\,,\ t\neq s\,.\]
	
	Since $g_{\bar M}\ge 0$ on $I \times I$, the behaviour for $M<\bar{M}$ has been characterized in Lemma \ref{L::2} and Theorem \ref{T::6}. 
	
	So we must pay our attention on the situation $M > \bar{M}$, i.e. $\lambda>0$. In such a case, since, as in Step 1.2, we have that  $w_s(t)\geq 0$ has a finite number of zeros in  $I$, we know that
	\[T_n[\bar{M}]\,w_s(t)=-\lambda \,w_s(t)<0 \text{ for a. e. } t\in I\,.\]
	
	Using \eqref{e-descomp} and \eqref{e-Tk}, we have that
	 $$T_n[\bar{M}]\,w_s(t)=v_1(t)\dots v_n(t) \,T_n \,w_s(t),$$ 
	 with $v_k>0$ on $I$ for  $k=1,\dots,n$. In particular, it is satisfied that $T_n w_s(t)<0$ a.e. in $I$.
	
	Notice that, for all $s \in (a,b)$, it is satisfied that $w_s \in C^{n-2}(I)$ and $w_s^{(n-1)}(s^+)-w_s^{(n-1)}(s^-)=1$.
	Therefore, due to the definition of $T_n[\bar{M}]$ and expression \eqref{Ec::7}, we have that $\dfrac{1}{v_n(t)}T_{n-1} w_s(t)$ is a continuous function on $[a,s) \cup (s,b]$.
	
	Since $T_n w_s(t)=\dfrac{d}{dt}\left( \dfrac{1}{v_n(t)}T_{n-1} w_s(t)\right) <0$ for $t \neq s$, we can affirm that $\dfrac{1}{v_n(t)}\,T_{n-1} w_s(t)$ is a decreasing function on $I$ with a positive jump at $t=s$. So, it can have, at most, two zeros  in $I$, (see Figure \ref{Fig::1}).
	
	\begin{figure}[h]
\centering
\includegraphics[width=0.3\textwidth]{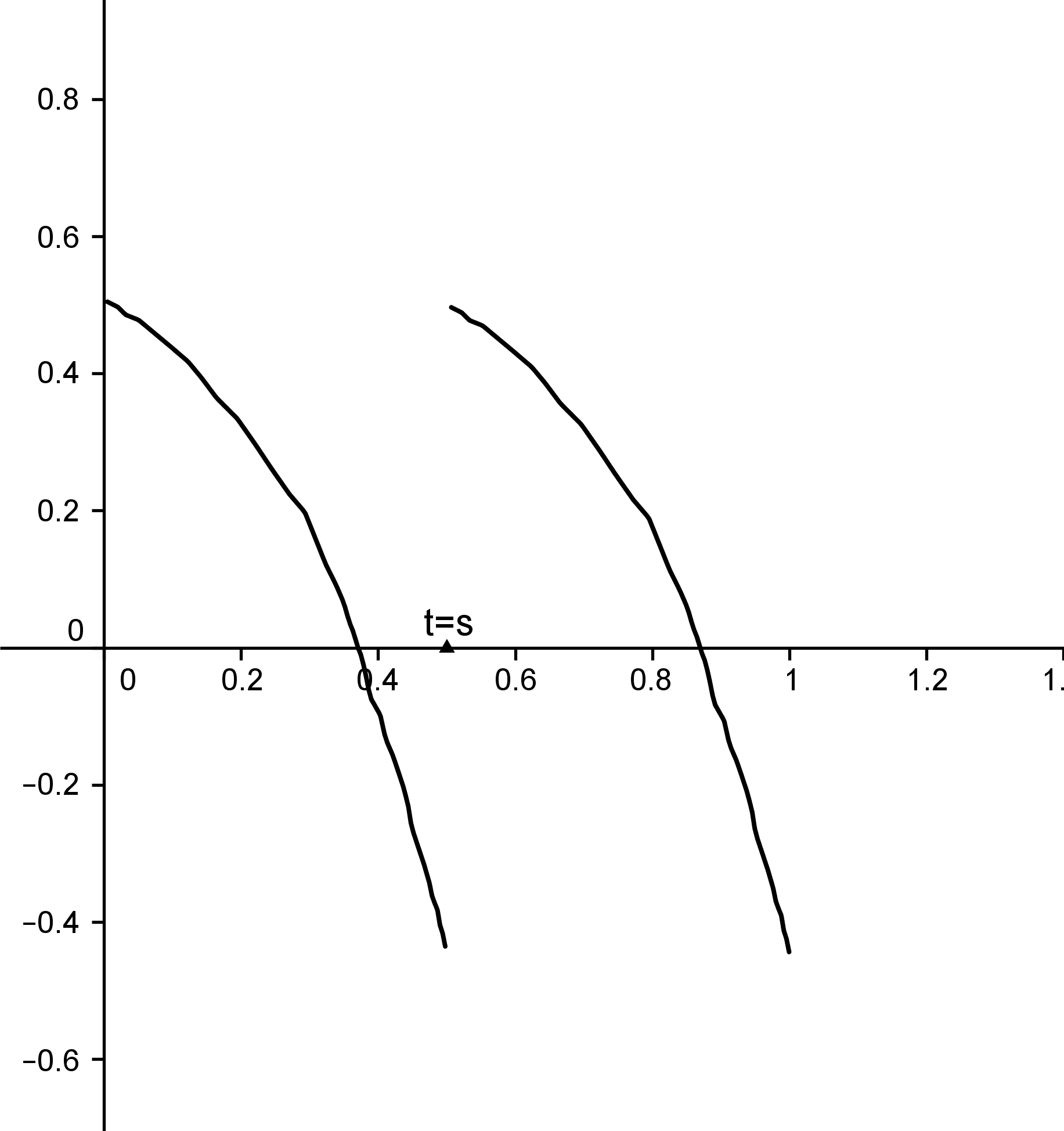}
\caption{\scriptsize{$\dfrac{1}{v_n(t)}\,T_{n-1} w_s(t)$, maximal oscillation with $I=[0,1]$}\label{Fig::1}}

	\end{figure}

	Even we can not guarantee that $T_{n-1} w_s(t)$ is decreasing, since $v_n>0$ on $I$, we  conclude that it has the same sign as $\dfrac{1}{v_n(t)}\,T_{n-1} w_s(t)$, i.e, it can have at most two zeros on $I$.
	
	By the other hand, using equation \eqref{Ec::7} again, we conclude that $\dfrac{1}{v_{n-1}(t)}\,T_{n-2} w_s(t)$ is a continuous function on $I$. Now, \eqref{e-Tk} tell us that  $\dfrac{1}{v_{n-1}(t)}\,T_{n-2} w_s(t)$ can reach at most $4$ zeros on $I$ (see Figure \ref{Fig::2}).

	\begin{figure}[h]
\centering
\includegraphics[width=0.3\textwidth]{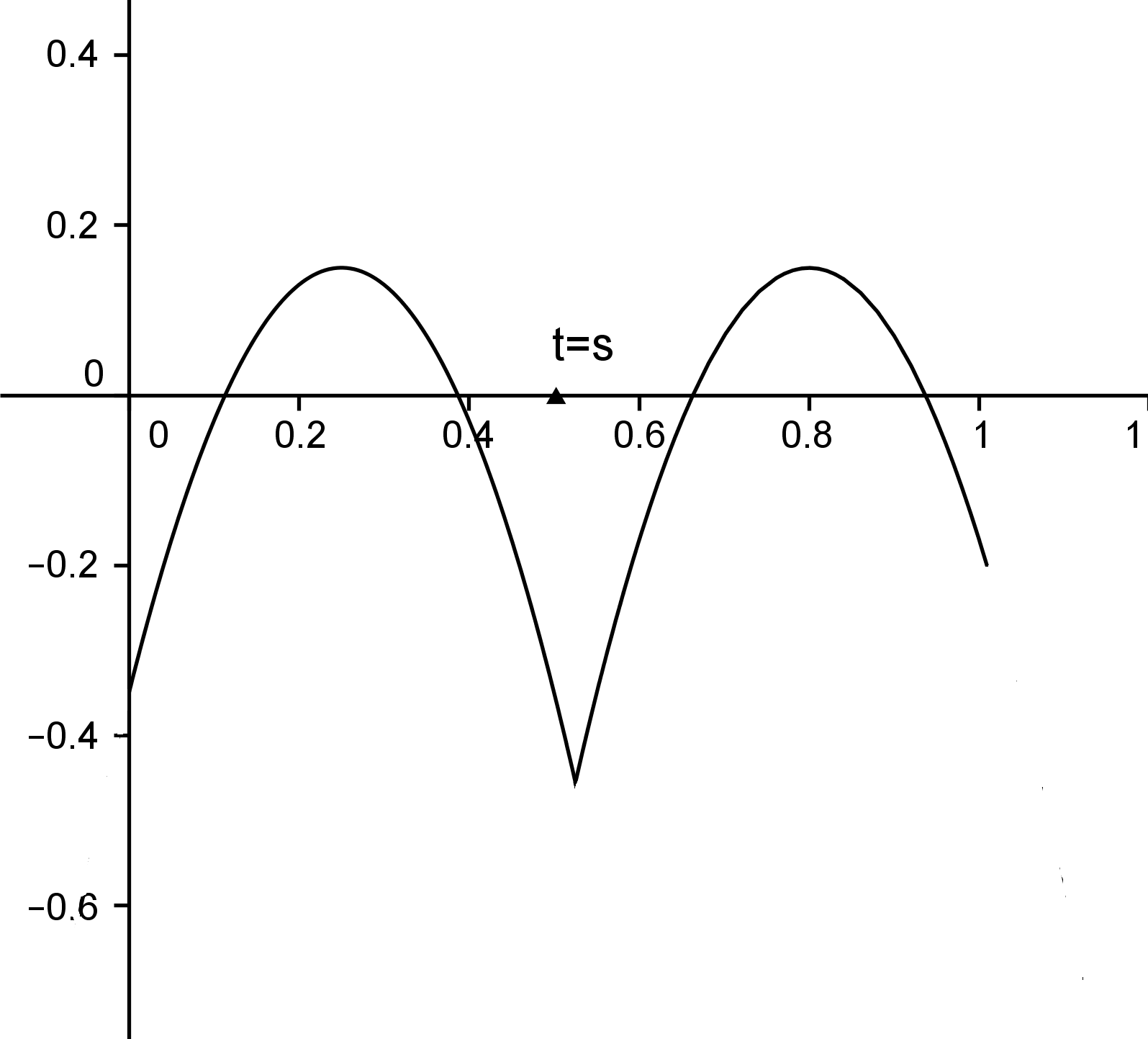}
\caption{\scriptsize{$\dfrac{1}{v_{n-1}(t)}\,T_{n-2} w_s(t)$, maximal oscillation with $I=[0,1]$}\label{Fig::2}}

	\end{figure}
	
	As before, we do not know intervals where $T_{n-2} w_s(t)$ is increasing or decreasing, but since $v_{n-1}(t)>0$ we conclude that it has the same sign as $\dfrac{1}{v_{n-1}(t)}\,T_{n-2} w_s(t)$, so it can reach at most $4$ zeros.
	
	Following this argument, since  $v_k>0$ on $I$ for $k=1,\dots,n$, we know that  $T_{n-2-h}w_s(t)$ can have not more than $4+h$ zeros on $I$ (multiple zeros being counted according to their multiplicity). In particular, $w_s(t)= T_0 w_s(t)$ can have  $n+2$ zeros at most, having $n$ in the boundary.

	This fact  allows $w_s$ to have a double zero on $(a,b)$. So, to show that such double root cannot exist, we need to prove that maximal oscillation is not possible. To this end, we point out that if for any $h$  it is verified that the sign of $T_{n-2-h}w_s(a)$ is equal to the sign of $T_{n-2-{h+1}}w_s(a)$ we lose a possible oscillation.
	
	Therefore, for maximal oscillation it must be satisfied
	
	\[\left\lbrace\begin{array}{ll} T_{n-h}w_s(a)>0, \quad & \text{if } h\text{ odd},\\&\\
	T_{n-h}w_s(a)<0, \quad & \text{if } h\text{ even.}\end{array}\right. \]
	
	However, since $w_s(t)\geq 0$ on $I$ and $w_s(a)=w_s'(a)= \dots =w_s^{(k-1)}(a)=0$, we deduce that $w_s^{(k)}(a)\geq 0$.
	
We can assume that $w_s^{(k)}(a)>0$ because, on the contrary, if $w_s^{(k)}(a)=0$ we would have $n+2$ zeros at most, having $n+1$ in the boundary. So, only a simple zero is allowed in the interior, which is not possible without oscillation.
	
	Therefore $ w_s^{(k)}(a)=w_s^{(n-(n-k))}(a)>0$. Since $n-k$ is even, using now \eqref{e-Tk-a}, we also know that $T_k w_s(a)>0$, which inhibits maximal oscillation.  
	
	So we conclude that if $g_M(t,s)\geq 0$ on $I\times I$ then $g_M(t,s)>0$ on $(a,b)\times(a,b)$, as we wanted to prove.\\
	
	As a consequence of the three previous Steps, we have described the set of the real parameters $M$ for which the Green's function is non-negative on $I \times I$ when $n-k$ is even. \\

	If $n-k$ is odd we can do similar arguments to achieve the proof. In the sequel, we enumerate the main ideas to be developed 
	
	\begin{itemize}
\item[Step 1.]{\color{white} 1}

\begin{itemize}
\item[Step 1.1.] It has no modifications. 
\item[Step 1.2.] In equality (\ref{Ec::lll}) we have $\lambda <0$ and $v(t)<0$ a.e. in $I$, so it remains true and we can proceed analogously.
\item[Step 1.3.] In this case, we have that $v^{(k-1)}(a)<0$. 
Our attainment in this Step is that $g_M(t,s)$ remains negative while $M\in [\bar{M}-\lambda_2',\bar{M}]$ in a neighborhood of $s=a$ and oscillates for all $M<\bar{M}-\lambda_2'$.
\item[Step 1.4.] The arguments are not modified, but the final achievement is that $g_M(t,s)$ is negative in a neighborhood of $s=b$ for $M\in[\bar{M}-\lambda_2'',\bar{M}]$ an oscillates for all $M<\bar{M}-\lambda_2''$.
	\end{itemize}
	\item[Step 2.] Using the same arguments we conclude that the interval where $g_M(t,s)$ is non-positive on the boundary of $I\times I$ is $[\bar{M}-\lambda_2,\bar{M}]$.
	\item[Step 3.] In this case  we have that $w^{(k)}_s(a)=w^{(n-(n-k))}_s(a)<0$, with $n-k$ odd contradicting maximal oscillation too.
	\end{itemize}

Thus, our result is proved. \qed\\

As a direct consequence of the arguments used in Step 1.3, without assuming the existence of $\bar{M}\in\mathbb{R}$ for which equation $T_n[\bar{M}]\,u(t)=0$  is disconjugate on $I$, we arrive at the following result.

\begin{corollary}
\label{cor:noIN}
Let $T_n[M]$ be defined as in \eqref{e-Ln}. Then the two following properties hold:

If $n-k$ is even, then it does not exist $M\in\mathbb{R}$ such that operator $T_n[M]$ is inverse negative in $X_k$.

If $n-k$ is odd, then it  does not exist $M\in\mathbb{R}$ such that operator $T_n[M]$ is inverse positive in $X_k$.
\end{corollary}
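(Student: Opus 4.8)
The plan is to isolate, from the computations in Step 1.3, the one fact that never uses disconjugacy: for every $M\in\R$ the Green's function $g_M$ is forced to take the sign $(-1)^{n-k}$ on a neighborhood of the corner $(a,a)$. By Theorem \ref{T::in1}, inverse negativity (resp. inverse positivity) of $T_n[M]$ on $X_k$ is equivalent to $g_M\le 0$ (resp. $g_M\ge 0$) on all of $I\times I$, so exhibiting a single point where $g_M$ has sign $(-1)^{n-k}$ immediately excludes the offending constant sign.

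First I would recall the function $v(t)={g_M^2}_{s^{n-k}}(t,a)$ introduced in Step 1.1. By Remark \ref{re-noIN}, for every $M\in\R$ and every admissible $k$ this $v$ is the unique solution of problem $(P_v)$; in particular $v(a)=\dots=v^{(k-2)}(a)=0$ and $v^{(k-1)}(a)=(-1)^{n-k}$, and these identities hold with no disconjugacy assumption. Although $v$ itself depends on $M$, the value $v^{(k-1)}(a)=(-1)^{n-k}$ does not. A Taylor expansion about $t=a$ then gives $v(t)=\tfrac{(-1)^{n-k}}{(k-1)!}(t-a)^{k-1}+o((t-a)^{k-1})$, so $v$ has the constant sign $(-1)^{n-k}$ on some right neighborhood of $a$; I would fix a point $t_0$ in that neighborhood with $v(t_0)\neq 0$.

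Next I would transfer this sign to $g_M$ itself. Step 1.1 also supplies ${g_M^2}_{s^{h}}(t,a)=0$ for $0\le h\le n-k-1$, so the Taylor expansion of $g_M^2$ in the variable $s$ about $s=a$ has its first nonzero term at order $n-k$, whence
\[
\lim_{s\to a^+}\frac{g_M^2(t_0,s)}{(s-a)^{n-k}}=\frac{v(t_0)}{(n-k)!}\neq 0.
\]
Since $(s-a)^{n-k}>0$ and $\operatorname{sign} v(t_0)=(-1)^{n-k}$, for all $s$ sufficiently close to $a$ (so that $s<t_0$ and the pair $(t_0,s)$ lies in the region $a\le s\le t$ where $g_M=g_M^2$) the value $g_M(t_0,s)$ has sign $(-1)^{n-k}$.

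Finally I would conclude by cases. If $n-k$ is even then $g_M$ is strictly positive near $(a,a)$ for every $M$, hence never non-positive, so $T_n[M]$ is never inverse negative on $X_k$; if $n-k$ is odd then $g_M$ is strictly negative there, hence never non-negative, so $T_n[M]$ is never inverse positive on $X_k$. I expect the only delicate point to be the justification that the leading Taylor term dictates the sign, that is, that the displayed limit is both well defined and nonzero. This is exactly secured by $v(t_0)\neq 0$ together with the vanishing of the lower-order $s$-derivatives of $g_M^2$, and, crucially, it requires no control whatsoever on $M$, which is what upgrades the conclusion to all $M\in\R$.
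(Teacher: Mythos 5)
Your proof is correct and follows essentially the same route as the paper: both arguments rest on the fact that $v={g_M^2}_{s^{n-k}}(\cdot,a)$ solves $(P_v)$ with $v^{(k-1)}(a)=(-1)^{n-k}$ independently of $M$ and of any disconjugacy hypothesis, forcing $g_M$ to take the sign $(-1)^{n-k}$ near the corner $(a,a)$, after which Theorem \ref{T::in1} concludes. The only difference is that you spell out, via the two Taylor expansions in $t$ and in $s$, the sign-transfer step that the paper states in one line.
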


\begin{proof}
It is enough to take into account that $v$, defined in \eqref{d-v}, is the unique solution of problem $(P_v)$.  Since $v^{(k-1)}(a)=(-1)^{n-k}$ we conclude that, if $n-k$ is even, Green's function has positive values in any neighborhood of $(a,a)$ and negative when $n-k$ is odd. 

So, the result holds from Theorem \ref{T::in1}.
\end{proof}

\section{Particular cases}

In order to obtain the eigenvalues of particular problems we calculate a fundamental system of solutions $y_1[M](t),\dots,y_n[M](t)$ of equation \eqref{e-Ln} where every $y_k[M](t)$ verifies the initial conditions

\[y_k^{(n-k)}[M](a)=1\,,\quad y_k^{(n-j)}[M](a)=0\,,\ j=1,\dots,n\,,\ j\neq k\,.\]

Then we denote  the $n-1$ Wronskians as
\[W^n_k[M](t)=\left| \begin{array}{ccc}
y_1[M](t)&\dots &y_k[M](t)\\
y_1'[M](t)&\dots&y_k' [M](t)\\
&\vdots&\\
y_1^{(k-1)}[M](t)&\dots&y_k^{(k-1)}[M](t)\end{array}\right| \,,\quad k=1,\dots,n-1\,.\]

As a consequence of the characterization done in \cite[Chapter 3, Lemma 12]{Cop}, we deduce that the eigenvalues of problem \eqref{e-Ln} in $X_k$ are given as the $\lambda\in \mathbb{R}$ for which $W_{n-k}[-\lambda](b)=0$. So, in the sequel, we will use this method to find the eigenvalues of the different considered problems.

\subsection{Operator $T_n[M]\,u(t)\equiv u^{(n)}(t)+M\,u(t)$}

First of all, we are going to consider problems where $T_n[M]\,u(t)\equiv u^{(n)}(t)+M\,u(t)$, with  $[a,b]=[0,1]$. 

In this kind of problems, for $M=0$, $u^{(n)}(t)=0$ is always disconjugate, see \cite[Chapter 3]{Cop}. So, hypotheses of Theorem \ref{T::5} are satisfied.

\begin{remark}
	Note that adjoint equation to problem $T_n[M]\,u=0,  u \in X_k$  is given by 
	$$T_n^*[M]\,u(t)=(-1)^n\,u^{(n)}(t)+M\,u(t)=0, \quad u \in X_{n-k}.$$
	
	So, if we have that $\lambda_i$ is an eigenvalue of $u^{(n)}$ in $X_k$, it is also an eigenvalue of $(-1)^n\,u^{(n)}$ in $X_{n-k}$. Thus, $(-1)^n\,\lambda_i$ is an eigenvalue of $u^{(n)}$ in $X_{n-k}$.
	
	As consequence, we only need to obtain first $\left\lfloor\dfrac{n}{2}\right\rfloor$ Wronskians, where $\lfloor\cdot \rfloor$ means the floor function.
\end{remark}

\begin{itemize}
	\item[-] Order 2
\end{itemize}

The eigenvalues of operator $u''(t)$ in $X_1$ must satisfy $W_1^2[\lambda](1)=0$, which can be replaced by the following equation
\begin{equation}
\sin (\sqrt{-\lambda})=0\,,
\end{equation}
so it closest to zero negative eigenvalue is $\left( \lambda_2^1\right) ^2=-\pi^2$.

And so, we can affirm that Green's function related to operator $u''(t)+M\,u(t)$ is negative if, and only if, $M\in\left( -\infty\,,\,\pi ^2\right) $.

This result has been already obtained in different references (See \cite{Cab} and references therein), but here it is not necessary to have the expression of the Green's function.
\begin{itemize}
	\item[-] Order 3
\end{itemize}

 $ \lambda_3^1\approxeq 4.23321$ is the least positive solution of $W_1^2[\lambda^3]=0$, which is equivalent to the equation 
\begin{equation*}
\cos \left(\frac{1}{2} \sqrt{3} \lambda \right)-\sqrt{3} \sin \left(\frac{1}{2} \sqrt{3} \lambda  \right)=e^{\frac{-3\,\lambda}{2}}\,.
\end{equation*}

Then, the least positive eigenvalue of operator $u^{(3)}(t)$ in $X_1$  is $\left( \lambda_3^1\right)  ^3$ and the biggest negative eigenvalue of operator $u^{(3)}(t)$ in $X_2$  is $-\left(  \lambda_3^1\right)  ^3$.

So, we can affirm that Green's function of operator $u^{(3)}(t)+M\,u(t)$
\begin{itemize}
	\item in $X_1$ is positive if, and only if, $M\in \left( -\left(  \lambda_3^1\right)  ^3,\left(  \lambda_3^1\right)  ^3\right] $,
	\item in $X_2$ is positive if, and only if, $M\in \left[ -\left( \lambda_3^1\right)  ^3,\left( \lambda_3^1\right) ^3\right) $.
\end{itemize}

This result has been obtained by means of the explicit form of Green's function in \cite{ML}.

\begin{itemize}
	\item[-] Order 4
\end{itemize}

$\lambda_4^1\approxeq 5.553$ is the least positive solution of $W_1^4[\lambda^4](1)=0$, simplifying that expression we have
\begin{equation*}
\tan\left( \dfrac{\lambda}{\sqrt{2}}\right) =\tanh\left( \dfrac{\lambda}{\sqrt{2}}\right) \,.
\end{equation*}

 $\lambda_4^2\approxeq4.73004$ is the least positive solution of $W_2^4[-\lambda^4](1)=0$, which can be expressed as
\begin{equation*}
\cos (\lambda ) \cosh (\lambda )=1\,.
\end{equation*}

The biggest negative eigenvalue of operator $u^{(4)}(t)$ in $X_1$ and $X_3$ is given by $-\left( \lambda_4^1\right) ^4$. 

The least positive eigenvalue of operator $u^{(4)}(t)$ in $X_2$ is $\left( \lambda_4^2\right) ^4$.\\

Therefore, we can affirm without calculating it explicitly, that Green's function related to the operator $u^{(4)}(t)+M\,u(t)$
\begin{itemize}
	\item in $X_1$ and $X_3$ is negative if, and only if, $M\in\left[ -\left( \lambda_4^2\right) ^4,\left( \lambda_4^1\right) ^4\right)$.

	\item in $X_2$ is positive if, and only if, $M\in\left( -\left( \lambda_4^2\right) ^4,\left( \lambda_4^1\right) ^4\right]$.
	
\end{itemize}

These results have been obtained using the explicit form of Green's function in \cite{CE} and \cite{CaFe}.

\begin{itemize}
	\item[-] Order 5
\end{itemize}

We can obtain $\lambda_5^1\approxeq  6.94867$ and $\lambda_5^2\approxeq 5.64117$  as the least positive solution of $W_1^5[\lambda^5](1)=0$ and $W_2^5[-\lambda^5](1)=0$, respectively. But the equations obtained are too complicate to show here and they have not so much interest.

The least positive eigenvalue of operator $u^{(5)}(t)$ in $X_1$ is $\left( \lambda_5^1\right) ^5$.

The biggest negative eigenvalue of operator $u^{(5)}(t)$ in $X_2$ is $-\left( \lambda_5^2\right) ^5$.

The least positive eigenvalue of operator $u^{(5)}(t)$ in $X_3$ is $\left( \lambda_5^2\right) ^5$.

The biggest negative eigenvalue of operator $u^{(5)}(t)$ in $X_4$ is $-\left( \lambda_5^1\right) ^5$.\\

Therefore, we conclude without calculating it explicitly, that Green's function related to the operator $u^{(5)}(t)+M\,u(t)$
\begin{itemize}
	\item in $X_1$ is positive if, and only if, $M\in\left( -\left( \lambda_5^1\right) ^5,\left( \lambda_5^2\right) ^5\right]$.
	
	\item in $X_2$ is negative if, and only if, $M\in\left[ -\left( \lambda_5^2\right) ^5,\left( \lambda_5^2\right) ^5\right)$.
	
	\item in $X_3$ is positive if, and only if, $M\in\left( -\left( \lambda_5^2\right) ^5,\left( \lambda_5^2\right) ^5\right]$.
	
	\item in $X_4$ is negative if, and only if, $M\in\left[ -\left( \lambda_5^2\right) ^5,\left( \lambda_5^1\right) ^5\right)$.
\end{itemize}

\begin{itemize}
	\item[-] Order 6
\end{itemize}

	$\lambda_6^1\approxeq 8.3788$ is the least positive solution of $W_1^6[\lambda^6](1)=0$, which is equivalent to
	
	\[\sin (\lambda )-\sqrt{3} \cos \left(\frac{\lambda }{2}\right) \sinh \left(\frac{\sqrt{3} \lambda }{2}\right)+\sin \left(\frac{\lambda }{2}\right) \cosh
	\left(\frac{\sqrt{3} \lambda }{2}\right)=0\,.\]
	
$\lambda_6^2\approxeq 6.70763$ is the least positive solution of  $W_2^6[-\lambda^6](1)=0$, which we can express as
	
{\footnotesize 	\[-3 e^{\lambda /2} \left(e^{2 \lambda }+1\right)+\sqrt{3} \left(e^{\lambda }-1\right)^3 \sin \left(\frac{\sqrt{3} \lambda }{2}\right)+\left(e^{\lambda }+1\right)^3 \cos
	\left(\frac{\sqrt{3} \lambda }{2}\right)-2 e^{3 \lambda /2} \cos \left(\sqrt{3} \lambda \right)=0\,.\]}
	
	 $\lambda_6^3\approxeq 6.28319^6$ is the least positive solution of $W_3^6[\lambda^6](1)=0$, which can be represented as the first positive root of the following equation
	\[\sin (\lambda ) \left(-\cos (\lambda )+\cosh \left(\sqrt{3} \lambda \right)+4\right)-8 \sin \left(\frac{\lambda }{2}\right) \cosh \left(\frac{\sqrt{3} \lambda }{2}\right)=0\,.\]

	The biggest negative eigenvalue of operator $u^{(6)}(t)$ in $X_1$ and $X_5$ is given by $-\left(\lambda_6^1\right)^6$.

	The least positive eigenvalue of operator $u^{(6)}(t)$ in $X_2$ and $X_4$ is $\left(\lambda_6^2\right)^6$.

	The biggest negative eigenvalue of operator $u^{(6)}(t)$ in $X_3$ is $-\left(\lambda_6^3\right)^6$.\\

	Hence, we can affirm without calculating it explicitly, that Green's function related to  operator $u^{(6)}(t)+M\,u(t)$
	\begin{itemize}
\item in $X_1$ or in $X_5$ is negative if, and only if, $M\in\left[ -\left( \lambda_6^2\right) ^6,\left( \lambda_6^1\right) ^6\right)$.

\item in $X_2$ or in $X_4$ is positive if, and only if, $M\in\left( -\left( \lambda_6^2\right) ^6,\left( \lambda_6^3\right) ^6\right]$.

\item in $X_3$ is negative if, and only if, $M\in\left[ -\left( \lambda_6^2\right) ^6,\left( \lambda_6^3\right) ^6\right)$.
	
	\end{itemize}
	
	\begin{itemize}
\item[-] Order 7
	\end{itemize}
	
	We are not able to obtain analytically the eigenvalues of operator $u^{(7)}(t)$, but we can obtain them numerically.
	
	The least positive eigenvalue of this operator in $X_1$ is $\left( \lambda_7^1\right) ^7$, where $\lambda_7^1\approxeq 9.82677$.
	
	The biggest negative eigenvalue in $X_2$ is $-\left( \lambda_7^2\right) ^7$, where $\lambda_7^2\approxeq 7.85833$.
	
	The least positive eigenvalue in $X_3$ is $\left( \lambda_7^3\right) ^7$, where $\lambda_7^3\approxeq 7.1347$.
	
	The biggest negative eigenvalue in $X_4$ is $-\left( \lambda_7^3\right) ^7$.
	
	The least positive eigenvalue in $X_5$ is $\left( \lambda_7^2\right) ^7$.
	
	The biggest negative eigenvalue in $X_6$ is 
	$-\left( \lambda_7^1\right) ^7$.\\
	
	So, we conclude without calculating it explicitly, that Green's function related to the operator $u^{(7)}(t)+M\,u(t)$
	\begin{itemize}
\item in $X_1$ is positive if, and only if, $M\in\left( -\left( \lambda_7^1\right) ^7,\left( \lambda_7^2\right) ^7\right]$.

\item in $X_2$ is negative if, and only if, $M\in\left[ -\left( \lambda_7^3\right) ^7,\left( \lambda_7^2\right) ^7\right)$.

\item in $X_3$ is positive if, and only if, $M\in\left( -\left( \lambda_7^3\right) ^7,\left( \lambda_7^3\right) ^7\right]$.

\item in $X_4$ is negative if, and only if, $M\in\left[ -\left( \lambda_7^3\right) ^7,\left( \lambda_7^3\right) ^7\right)$.

\item in $X_5$ is positive if, and only if, $M\in\left( -\left( \lambda_7^2\right) ^7,\left( \lambda_7^3\right) ^7\right]$.
	
\item in $X_6$ is negative if, and only if, $M\in\left[ -\left( \lambda_7^2\right) ^7,\left( \lambda_7^1\right) ^7\right)$.

	\end{itemize}

	\begin{itemize}
\item[-] Order 8
	\end{itemize}
	$\lambda_8^1\approxeq 11.2846$, $\lambda_8^2\approxeq 9.06306$, $\lambda_8^3\approxeq 8.09971$ and $\lambda_8^4\approxeq7.81871$ can be obtained analytically as  the least positive solution of $W_1^8[\lambda^8](1)=0$, $W_2^8[-\lambda^8](1)=0$, $W_3^8[\lambda^8](1)=0$ and $W_4^8[-\lambda^8](1)=0$ respectively, but their expressions are too big to show it here and they do not bring any important information.
	
	The biggest negative eigenvalue of operator $u^{(8)}(t)$ in $X_1$ and $X_7$ is given by $-(\lambda_8^1)^8$.

The least positive eigenvalue of operator $u^{(8)}(t)$ in $X_2$ and $X_6$ is given by $(\lambda_8^2)^8$.

The biggest negative eigenvalue of operator $u^{(8)}(t)$ in $X_3$ and $X_5$ is given by $-(\lambda_8^3)^8$.

The least positive eigenvalue of operator $u^{(8)}(t)$ in $X_4$ is $(\lambda_8^4)^8$.\\

So, we can affirm without calculating it explicitly, that Green's function related to the operator $u^{(8)}(t)+M\,u(t)$

\begin{itemize}
	\item in $X_1$ or in $X_7$ is negative if, and only if, $M\in\left[ -\left( \lambda_8^2\right) ^8,\left( \lambda_8^1\right) ^8\right)$.

	\item in $X_2$ or in $X_6$ is positive if, and only if, $M\in\left( -\left( \lambda_8^2\right) ^8,\left( \lambda_8^3\right) ^6\right]$.

	\item in $X_3$ or in $X_5$ is negative if, and only if, $M\in\left[ -\left( \lambda_8^4\right) ^8,\left( \lambda_8^3\right) ^8\right)$.

	\item in $X_4$ is positive if, and only if, $M\in\left( -\left( \lambda_8^4\right) ^8,\left( \lambda_8^3\right) ^8\right]$.

	\end{itemize}
	
As we have said before, third-order problems were explicitly calculated on \cite{ML}. And fourth-order problems were calculated on \cite{CE} in $X_2$ and on \cite{CaFe} in $X_1$ and $X_3$, respectively.
	But, in all of these cases were necessary to obtain the expression of Green's function and analyse it.
	
	Moreover, in all the problems treated on \cite{CE, CaFe, ML} it is also satisfied that the open optimal interval where Green's function is of constant sign coincide with the optimal interval where the equation \eqref{e-Ln} is disconjugate.
	
	However, in \cite[Theorem 2.1]{CaSaa} it is proved the following characterization of the interval of disconjugacy:
\begin{theorem}\label{T::aut}
		Let $\bar{M}\in\mathbb{R}$ and $n\geq 2$ be such that $T_n[\bar{M}]\,u(t)=0$ is a disconjugate equation on $I$. Then, $T_n[M]\,u(t)=0$ is a disconjugate equation on $I$ if, and only if, $M\in(\bar{M}-\lambda_1,\bar{M}-\lambda_2)$, where
		\begin{itemize}
			\item  $\lambda_1=+\infty$ if $n=2$ and, for $n>2$, $\lambda_1>0$ is the minimum of the least positive eigenvalues on $T_n[\bar{M}]$ in $X_k$, with $n-k$  even.
			\item $\lambda_2<0$ is the maximum of the biggest negative eigenvalues on $T_n[\bar{M}]$ in $X_k$, with $n-k$  odd.
		\end{itemize}
	\end{theorem}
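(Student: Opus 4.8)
The plan is to reduce disconjugacy on $I$ to the \emph{simultaneous} constant sign of the Green's functions of all the two-point problems $(k,n-k)$, $k\in\{1,\dots,n-1\}$, and then to read off the endpoints directly from Theorem \ref{T::5}. For each such $k$ let $C_k\subseteq\mathbb{R}$ be the set of parameters $M$ for which $T_n[M]$ is invertible on $X_k$ with a constant sign Green's function. Since the disconjugacy of the reference operator $T_n[\bar M]$ does not depend on $k$, Theorem \ref{T::5} applies for every $k$ and exhibits each $C_k$ as an interval whose endpoints are explicit eigenvalues of $T_n[\bar M]$. First I would establish the identity
\[
\{M:\ T_n[M]\text{ is disconjugate on }I\}=\bigcap_{k=1}^{n-1}C_k .
\]
The inclusion $\subseteq$ is immediate from Lemma \ref{L::1}: disconjugacy of $T_n[M]$ forces the Green's function of every $(k,n-k)$ problem to have constant sign, so $M\in C_k$ for all $k$.

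For the reverse inclusion I would argue by locating the first loss of disconjugacy. Since $\bar M$ is disconjugate, I would move $M$ continuously away from $\bar M$ and let $M_+^{*}$ (respectively $M_-^{*}$) be the first parameter above (below) $\bar M$ at which disconjugacy fails. By definition $T_n[M]$ is disconjugate on $[\bar M,M_+^{*})$, so by the easy inclusion this half-open interval lies in every $C_k$, whence $M_+^{*}$ cannot exceed the right endpoint of $\bigcap_k C_k$. The key analytic input, drawn from the classical disconjugacy theory of \cite{Cop}, is that at $M_+^{*}$ the extremal solution has all its $n$ zeros concentrated at the endpoints, i.e. a zero of some order $p$ at $a$ and order $n-p$ at $b$; hence $M_+^{*}$ is an eigenvalue of the $(p,n-p)$ problem, so $T_n[M_+^{*}]$ is not invertible on $X_p$ and $M_+^{*}$ is an endpoint of $C_p$. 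Were $M_+^{*}$ strictly interior to $\bigcap_k C_k$ it would be interior to $C_p$, contradicting non-invertibility; therefore $M_+^{*}$ equals the right endpoint of $\bigcap_k C_k$. The symmetric argument treats $M_-^{*}$, and the two together show that disconjugacy holds exactly on the open interior of $\bigcap_k C_k$, giving $\supseteq$.

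It then remains to compute $\bigcap_k C_k$ from Theorem \ref{T::5}. When $n-k$ is even, $C_k$ has open left endpoint $\bar M-\lambda_1^{(k)}$ with $\lambda_1^{(k)}>0$ the least positive eigenvalue in $X_k$, while its right endpoint is governed by the biggest negative eigenvalues of the \emph{odd} neighbours $X_{k\pm1}$; when $n-k$ is odd the roles are reversed. Intersecting, the supremum of the left endpoints is $\bar M-\lambda_1$ with $\lambda_1=\min_{\,n-k\ \mathrm{even}}(\text{least positive eigenvalue in }X_k)$, because every left endpoint produced by an odd index is controlled by the least positive eigenvalue of an even neighbour and is thus already accounted for; dually the infimum of the right endpoints is $\bar M-\lambda_2$ with $\lambda_2=\max_{\,n-k\ \mathrm{odd}}(\text{biggest negative eigenvalue in }X_k)$. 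Both extremes are attained at open endpoints, so $\bigcap_k C_k=(\bar M-\lambda_1,\bar M-\lambda_2)$, as claimed. The case $n=2$ is the degenerate one: no index has $n-k$ even, the adjacent ``problems'' $X_0,X_n$ are one-point initial value problems without eigenvalues, so the supremum over the empty family is $+\infty$ and the left endpoint is $-\infty$, i.e. $\lambda_1=+\infty$.

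I expect the main obstacle to be the reverse inclusion, specifically the disconjugacy-theoretic fact that the first failure of disconjugacy as $M$ varies is realized by a two-point endpoint configuration, and hence by an eigenvalue of some $X_p$, rather than by interior conjugate points. Making this precise requires the continuity and monotone dependence of the first conjugate point on the parameter $M$ together with the classification of extremal solutions at a first conjugate point; these are available in \cite{Cop} but must be invoked with care. The remaining steps — the easy inclusion via Lemma \ref{L::1} and the combinatorial intersection via Theorem \ref{T::5} — are routine, the only delicate bookkeeping being the parity cross-referencing that lets only even indices contribute to $\lambda_1$ and only odd indices to $\lambda_2$.
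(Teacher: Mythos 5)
You should first be aware that the paper does not prove Theorem \ref{T::aut} at all: it is imported verbatim from \cite[Theorem 2.1]{CaSaa}, so there is no in-paper proof to compare your route against, and I can only assess your argument on its own terms. Your overall architecture is reasonable and the endpoint bookkeeping is correct: the inclusion ``disconjugate $\Rightarrow M\in\bigcap_{k}C_k$'' does follow from Lemma \ref{L::1} (together with the easy observation that disconjugacy forces invertibility on each $X_k$), and your parity computation of $\bigcap_k C_k$ from Theorem \ref{T::5} correctly reproduces $(\bar M-\lambda_1,\bar M-\lambda_2)$, including the openness of both endpoints and the degenerate case $n=2$ (which, note, Theorem \ref{T::5} does not formally cover, since it requires $1\le k\le n-2$).

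The genuine gap is exactly the step you flag, and flagging it does not close it. You need that at the first parameter $M_+^{*}$ where disconjugacy fails there is a nontrivial solution with a zero of order $p$ at $a$ and of order $n-p$ at $b$, i.e.\ that the failure is witnessed by an eigenvalue of a $(p,n-p)$ problem on the \emph{full} interval. What the classical condensation-of-zeros argument gives at a loss of disconjugacy is only a solution with a $(p,n-p)$ zero configuration at two points $a_1<b_1$ of $[a,b]$, possibly with $[a_1,b_1]\subsetneq[a,b]$; such a solution does not obstruct membership of $M_+^{*}$ in any $C_p$ (which concerns the full-interval problems), so your contradiction evaporates. Your proposed repair, ``continuity and monotone dependence of the first conjugate point on the parameter $M$'', is not available in the form you need: monotonicity of $\eta_1(a)$ in $M$ is a Sturm-type statement that fails for general higher-order operators, and mere lower semicontinuity of $\eta_1$ (equivalently, openness of the set of disconjugate equations) does not exclude a conjugate point appearing strictly inside $(a,b)$ exactly at $M=M_+^{*}$. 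To close the gap one needs more structure, for instance: Levin's theorem that the extremal solution at the first conjugate point is of one sign on $(a,\eta_1(a))$; the Krein--Rutman identification of one-signed eigenfunctions with the principal eigenvalue of the associated constant-sign Green's operator (available here because, by Lemma \ref{L::1}, $T_n[\bar M]$ restricted to any subinterval still has constant-sign $(p,n-p)$ Green's functions); and a domain-monotonicity argument showing that these principal eigenvalues on subintervals $[a,c]$ are farther from zero than those on $[a,b]$, hence cannot be reached before $\bar M-\lambda_2$ (respectively $\bar M-\lambda_1$). Without some version of this chain the reverse inclusion, which is the whole content of the theorem, remains unproved.
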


	As consequence we have 	that  the  interval of constant sign of the Green's function and the one of the disconjugacy for the linear operator are not the same in general. We have already proved (see Lemma \ref{L::2}) that while equation \eqref{e-Ln} is disconjugate its related Green's function must be of constant sign. So, if both intervals do not coincide, the optimal interval where the equation \eqref{e-Ln} is disconjugate must be contained in the open optimal interval where Green's function is of constant sign .
	
	If, using the characterization given in Theorem \ref{T::aut}, we calculate the optimal interval on $M$ of disconjugacy for equation 
	\begin{equation*}
	u^{(5)}(t)+M\,u(t)=0\,,\quad  t\in[0,1].
	\end{equation*}
	
	We have that it is given by $(-\left( \lambda_5^2\right) ^5,\left( \lambda_5^2\right) ^5)$.
	
	But, as we have shown before, Green's function related to the problem on the space $X_1$ remains positive on the interval $(-\left( \lambda_5^1\right) ^5,-\left( \lambda_5^2\right) ^5]$. So, its biggest open interval is strictly bigger than the optimal interval of disconjugacy.

	\begin{remark}
In this kind of problems,  if $\lambda$ is an eigenvalue on $[0,1]$, then $\dfrac{\lambda}{(b-a)^n}$ is an eigenvalue on $[a,b]$.

So, we can obtain our conclusions about Green's function' sign on any arbitrary interval $[a,b]$.
	\end{remark}
	
	\subsection{Operators with constant coefficients}
	This characterization of the interval where Green's function is of constant sign is also useful for those problems which have more non-nulls coefficients .

	For example we can consider the operator of fourth order
	\begin{equation}\label{Ec::10}
	T_n[M]\,u(t)\equiv u^{(4)}(t)+10\,u^{(3)}(t)+10\,u''(t)+10\,u'(t)+M\,u(t), \quad t\in[0,1].
	\end{equation}
	
	We can show, using the characterization given in Theorem \ref{T::4}, that $T_n[0]\,u(t)=0$ is a disconjugate equation on $[0,1]$ and, so, Theorem \ref{T::5} holds.
	
	First, we calculate numerically the closest to zero eigenvalues in each  $X_k$, $k=1,2,3$. 
	
	\begin{itemize}
\item The biggest negative eigenvalue in $X_1$ is $-(7.02782)^4$.
\item The least positive eigenvalue in $X_2$ is $(5.27208)^4$.
\item The biggest negative eigenvalue in $X_3$ is $-(5.97041)^4$.
	\end{itemize}
	
	Realize that in this case we need to obtain the three correspondents Wronskians because it is not possible to connect eigenvalues in $X_1$ with those in $X_3$ by means of its corresponding adjoint equation.
	
	So, we  conclude that Green's function related to operator $T_n[M]\,u(t)$ defined in \eqref{Ec::10} satisfies that
	\begin{itemize}
\item in $X_1$ is  negative if, and only if, $M\in[-(5.27208)^4,(7.02782)^4)$,
\item in $X_2$ is positive if, and only if, $M\in(-(5.27208)^4,(5.97041)^4]$,
\item in $X_3$ is negative if, and only if, $M\in[-(5.27208)^4,(5,97041)^4)$.
	\end{itemize}
	
	Notice that in this case the interval of disconjugation is $(-(5.27208)^4,(5.97041)^4)$. So, we have obtained an example of a fourth order  equation in which its interval of disconjugation does not coincide with the biggest open interval where Green's function is of constant sign in $X_1$.
	
	In the sequel, we show an example where operator $T_n[M]$ does not verify disconjugation hypothesis for $\bar{M}=0$.
	
	If we choose the operator
	\begin{equation}\label{Ec::11}
	T_n[M]\,u(t)\equiv u^{(4)}(t)+10\,u^{(3)}(t)+550\,u'(t)+M\,u(t)\,,\quad t\in[0,1]\,.
	\end{equation}
	
	We obtain that  equation $T_n[0]\,u(t)=0$ is not disconjugate on $[0,1]$, but if we analyse the equation $T_n[-600]\,u(t)=0$ we can affirm, by means of Theorem \ref{T::4}, that it is disconjugate on $[0,1]$.
	
	Hence, Theorem \ref{T::5} can be applied to the operator $T_n[-600]\,u(t)$.
	
	If we calculate the closest to zero eigenvalues we have
	\begin{itemize}
\item The biggest negative eigenvalue of $T_n[-600]\,u(t)$ in $X_1$ is $-9565.99$.
\item The least positive eigenvalue in $X_2$ is $11.5685$.
\item The biggest negative eigenvalue in $X_2$ is $-28.9753$.
	\end{itemize}
	
	Hence, using Theorem \ref{T::in1}, we can affirm that operator $T_n[M]\,u(t)$, defined in \eqref{Ec::11}, satisfies that
	\begin{itemize}
\item in $X_1$ is inverse negative if, and only if, $M\in [-600-11.5685,-600+9565.99)=[-611.5685,8965.99)$.
\item in $X_2$ is inverse positive if, and only if, $M\in (-600-11.5685,-600+28.9753]=(-611.5685,-571.0247]$.
\item in  $X_3$ is inverse negative if, and only if, $M\in[-600-11.5685,-600+28.9753)=[-611.5685,-571.0247)$.
\end{itemize}
	\subsection{Operators with non-constant coefficients}
	We have already seen that applying Theorem \ref{T::5} is much easier to calculate optimal intervals for $M$ where Green's function related to operator $T_n[M]\,u(t)$ than obtain Green's function expression explicitly. But, if we are referring to an operator with non-constant coefficients this characterization is even more useful because in the majority of the situations we are not able to  obtain the explicit  expression for the Green's function.
	
	Consider now the third order operator
	\begin{equation}\label{Ec::9}
	T_n[M]\,u(t)\equiv u^{(3)}(t)+t\,u'(t)+M\,u(t)\,,\quad t\in[0,1],
	\end{equation}
 for which, by means of Theorem \ref{T::4}, we can verify that equation $T_n[0]\,u(t)=0$ is disconjugate  on $[0,1]$. 
	
	If we calculate numerically the closest to zero eigenvalues of operator defined in \eqref{Ec::9} we obtain
	
	\begin{itemize}
\item $(4.19369)^3$ is the least positive eigenvalue of operator $T_n[0]\,u(t)$ in $X_1$.
\item $-(4.21255)^3$ is the biggest negative eigenvalue of operator $T_n[0]\,u(t)$ in $X_2$.
	\end{itemize}
	
	So, we can affirm
	
	 \begin{itemize}
	\item Green's function related to operator $T_n[M]\,u(t)$ in $X_1$ is positive if, and only if, $M\in (-(4.19369)^3,(4.21255)^3]$,
	\item Green's function related to operator $T_n[M]\,u(t)$ in $X_2$ is negative if, and only if, $M\in [-(4.19369)^3,(4.21255)^3)$.
	\end{itemize}
	
	We can also apply it to a fourth order operator whose eigenvalues were also obtained numerically. 
	\begin{equation}
	T_n[M]\equiv u^{(4)}(t)+e^{2\,t} u'(t)+M\,u(t)\,,\quad t\in[0,1].
	\end{equation}
	
	We can verify, by means of Theorem \ref{T::4} again, that $T_n[0]\,u(t)=0$ is disconjugate on $[0,1]$.
	
	If we calculate its eigenvalues we obtain
	\begin{itemize}
\item The biggest negative eigenvalue in $X_1$ is $-(5.5325)^4$.
\item The least positive eigenvalue in $X_2$ is $(4.7235)^4$.
\item The biggest negative eigenvalue in $X_3$ is $-(5.5815)^4$.
	\end{itemize}
	
	So, applying Theorem \ref{T::5}, we conclude that 
	\begin{itemize}
\item Green's function related to operator $T_n[M]\,u(t)$ in $X_1$ is negative if, and only if, $M\in [-(4.7235)^4,(5.5325)^4)$,
\item Green's function related to operator $T_n[M]\,u(t)$ in $X_2$ is positive if, and only if, $M\in (-(4.7235)^4,(5.5325)^4]$,
\item Green's function related to operator $T_n[M]\,u(t)$ in $X_3$ is negative if, and only if,  $M\in [-(4.7235)^4,(5.5815)^4)$.
	\end{itemize}

	\section{Disconjugacy hypothesis cannot be removed on Theorem \ref{T::5}}\label{S:w3}
	
	In this last section we show that the disconjugacy hypothesis on Theorem \ref{T::5} for some $M=\bar{M}$ cannot be avoided in general.
	
		To this end, we consider the operator \begin{equation}\label{Ec::op}
		T_4[M]\,u(t)=u^{(4)}(t)-1000\,u'(t)+M\,u(t)\,,\quad t\in[0,1]\,,
		\end{equation}
		 coupled with two-point boundary value conditions 
		 \begin{equation}\label{Ec::bc}
		 u(0)=u'(0)=u''(0)=u(1)=0.
		 \end{equation}
		 
	The equation \eqref{Ec::op} is not disconjugate for $M=0$, indeed:
	\[u(t)=\frac{-e^{10 (t-1)}-2 e^{5-5 t} \cos \left(5 \sqrt{3} (t-1)\right)+3}{3000}\,,\]
	is a solution of $T_4[0]\,u(t)=0$ with $5$ zeros on $[0,1]$.
	
In a first moment we will verify that  Green's function related to problem \eqref{Ec::op}-\eqref{Ec::bc} satisfies condition $(N_g)$ for $\bar{M}=0$. So, by means of Theorem \ref{T::7}, we know that $N_T=[-\mu,-\lambda_1)$ for some $\mu \ge 0$. 

In a second part, we will prove that $\mu\neq \lambda_2$, with $\lambda_2$ the first eigenvalue related to operator $T_4[0]$ on the space $X_2$.

As a consequence, we deduce that the validity of Theorem \ref{T::5} is not ensured when the disconjugacy assumption fails.

We point out that, since the existence of at least one $\bar{M}$ for which operator $T_4[\bar{M}]$ is disconjugate on $[0,1]$ implies the validity of  Theorem \ref{T::5},
 operator $T_4[M]$ cannot be disconjugate on $[0,1]$ for any real parameter $M$ and not only for $\bar{M}=0$. 	
	\vspace{0.5cm}
		
	First, we obtain the Green's function expression related to the operator $T_4[0]\,u(t)$ in $X_3$, $g_0(t,s)$. By means of the Mathematica package developed in \cite{CCM}, we have that if follows  the expression
{\tiny	\[
	\left\lbrace  
	\begin{array}{cc}
	\frac{e^{10 (t-s)}-\frac{e^{-5 (2 s+t)} \left(-3 e^{10 s+5}+2 e^{15 s} \cos \left(5 \sqrt{3} (s-1)\right)+e^{15}\right) \left(-3 e^{5 t}+e^{15 t}+2 \cos \left(5 \sqrt{3}
			t\right)\right)}{-3 e^5+e^{15}+2 \cos \left(5 \sqrt{3}\right)}+2 e^{5 s-5 t} \cos \left(5 \sqrt{3} (t-s)\right)-3}{3000}, & 0\leq s\leq t\leq 1, \\\\\\
	-\frac{e^{-5 (2 s+t)} \left(-3 e^{10 s+5}+2 e^{15 s} \cos \left(5 \sqrt{3} (s-1)\right)+e^{15}\right) \left(-3 e^{5 t}+e^{15 t}+2 \cos \left(5 \sqrt{3} t\right)\right)}{3000
		\left(-3 e^5+e^{15}+2 \cos \left(5 \sqrt{3}\right)\right)}, & 0<t<s\leq 1.
	\end{array}
\right. \]}

Let us see now that $g_0(t,s)\leq 0$ on $[0,1]\times [0,1]$ and that it satisfies condition $(N_g)$, i.e., the following inequality is satisfied
\[\dfrac{g_0(t,s)}{t^3\,(t-1)}>0\,,\quad \mbox{for all   }  (t,s)\in[0,1]\times (0,1)\,.\]

To study the behaviour on a neighborhood of $t=0$ and $t=1$, we define the following functions

{\tiny \begin{eqnarray}\nonumber
k_1(s)=\lim_{t\rightarrow 0^+}\dfrac{g_0(t,s)}{t^3\,(t-1)}&=&\dfrac{e^{-10 s} \left(-3 e^{10 s+5}+2 e^{15 s} \cos \left(5 \sqrt{3} (s-1)\right)+e^{15}\right)}{6 \left(-3 e^5+e^{15}+2 \cos \left(5 \sqrt{3}\right)\right)},\\\nonumber\\\nonumber
k_2(s)=\lim_{t\rightarrow 1^-}\dfrac{g_0(t,s)}{t^3\,(t-1)}&=&  \frac{1}{300} e^{-10 s-5} \left(e^{15 s} \left(\sqrt{3} \sin \left(5 \sqrt{3} (s-1)\right)-\cos \left(5 \sqrt{3}
(s-1)\right)\right)+e^{15} {\color{white}\dfrac{e^{15}}{e^{15}} }\right. \\\nonumber\\\nonumber&&\left.+ \frac{\left(-3 e^{10 s+5}+2 e^{15 s} \cos \left(5 \sqrt{3} (s-1)\right)+e^{15}\right) \left(-e^{15}+\sqrt{3} \sin \left(5 \sqrt{3}\right)+\cos
	\left(5 \sqrt{3}\right)\right)}{-3 e^5+e^{15}+2 \cos \left(5 \sqrt{3}\right)}\right)\,.\end{eqnarray}}

In the sequel we will prove that both functions are strictly positive on $(0,1)$. 

It is not difficult to verify that $k_1(1)=k_1'(1)=k_1''(1)=0$ and that \[k_1^{(3)}(1)=-\frac{500 e^5}{-3 e^5+e^{15}+2 \cos \left(5 \sqrt{3}\right)}<0\,.\]

 If we prove that $k_1^{(3)}(s)$ is strictly negative on $[0,1]$, since, in such a case, $k_1''(s)$ would be positive and $k_1'(s)$ negative, we will deduce that $k_1(s)>0$ for $s\in(0,1)$.  
 
 Due to the fact that
\[k_1^{(3)}(s)=-\frac{500 e^{-10 s} \left(2 e^{15 s} \cos \left(5 \sqrt{3} (s-1)\right)+e^{15}\right)}{3 \left(-3 e^5+e^{15}+2 \cos \left(5 \sqrt{3}\right)\right)}\,,\]
we only must check that
\[{k_1}_1(s):=2 e^{15 s} \cos \left(5 \sqrt{3} (s-1)\right)+e^{15}>0\,,\quad s\in[0,1]\,.\]

But previous inequality holds immediately from the fact that 
\[\min_{s \in [0,1]}{k_1}_1(s)=e^{15}\left(1-e^{-\frac{2 \pi}{ \sqrt{3}}}\right) >0\,,\quad s\in[0,1]\,.\]

Consider now function $k_2$. We have that $k_2(0)=0$ and 
\[k_2'(0)=\frac{1+e^5 \left(e^{15}-\sqrt{3} \left(e^{10}-1\right) \sin \left(5 \sqrt{3}\right)-\left(1+e^{10}\right) \cos \left(5 \sqrt{3}\right)\right)}{10 e^5 \left(-3 e^5+e^{15}+2 \cos
	\left(5 \sqrt{3}\right)\right)}>0\,.\] 

So, we study the sign of its first derivative
{ \begin{eqnarray}\nonumber k_2'(s)&=&\frac{e^{-10 s-5}}{30 \left(-3 e^5+e^{15}+2 \cos \left(5 \sqrt{3}\right)\right)}\,{k_2}_0(s), \end{eqnarray}}
with
{ 
\begin{eqnarray}\nonumber
{k_2}_0(s)&=&e^{15 s} \left(\sqrt{3} \left(e^5 \left(2 e^{10}-3\right) \sin \left(5 \sqrt{3} (s-1)\right)+\sin \left(5 \sqrt{3} s\right)\right)
-3 e^5 \cos \left(5 \sqrt{3} (s-1)\right) \right.\\
&& \nonumber \left.+3
\cos \left(5 \sqrt{3} s\right)\right) +e^{15} \left(3 e^5-\sqrt{3} \sin \left(5 \sqrt{3}\right)-3 \cos \left(5 \sqrt{3}\right)\right).\end{eqnarray}}

It is clear that such function satisfies
\[{k_2}_0(s)>\left(-3-3 e^5+\sqrt{3} \left(-1+3 e^5-2 e^{15}\right)\right) e^{15 s}+e^{15} \left(3 e^5-\sqrt{3} \sin \left(5 \sqrt{3}\right)-3 \cos \left(5 \sqrt{3}\right)\right),\]
which is positive for 

{\scriptsize $s<\frac{1}{15} \left(\log \left(3 e^{20}-\sqrt{3} e^{15} \sin \left(5 \sqrt{3}\right)-3 e^{15} \cos \left(5 \sqrt{3}\right)\right)-\log \left(3+\sqrt{3}+3 e^5-3 \sqrt{3} e^5+2
\sqrt{3} e^{15}\right)\right)\approxeq0.32389$}.

Moreover, for $s\in[1-\frac{2 \pi }{5 \sqrt{3}},1-\frac{ \pi }{5 \sqrt{3}}]\approxeq[0.27448,0.63724]$ we have that
\[{k_2}_0(s)>\left(-4-3 e^5\right) e^{15 s}+e^{15} \left(3 e^5-\sqrt{3} \sin \left(5 \sqrt{3}\right)-3 \cos \left(5 \sqrt{3}\right)\right)\,,\]
and right part of previous equality is positive for \[s<\frac{1}{15} \left(\log \left(3 e^{20}-\sqrt{3} e^{15} \sin \left(5 \sqrt{3}\right)-3 e^{15} \cos \left(5 \sqrt{3}\right)\right)-\log \left(4+3 e^5\right)\right)\approxeq 0.99954.\]

Then, we have that $k_2'(s)>0$ for $s\in[0,1-\frac{ \pi }{5 \sqrt{3}}]$, and, as consequence, the same holds for $k_2(s)$.

On the other hand, we have that $k_2(1)=k_2'(1)=0$ and $k_2''(1)=1$, moreover
\[k_2''(s)=\frac{e^{-10 s-5}}{3 \left(-3 e^5+e^{15}+2 \cos \left(5 \sqrt{3}\right)\right)} {k_2}_1(s)\,,\]
where
{\tiny \begin{eqnarray}\nonumber {k_2}_1(s)&=&e^{15 s} \left(\sqrt{3} \left(e^{15} \sin \left(5 \sqrt{3} (s-1)\right)-\sin \left(5 \sqrt{3} s\right)\right)+3 e^5 \left(e^{10}-2\right) \cos \left(5 \sqrt{3} (s-1)\right)+3
\cos \left(5 \sqrt{3} s\right)\right)\\\nonumber&&+e^{15} \left(-3 e^5+\sqrt{3} \sin \left(5 \sqrt{3}\right)+3 \cos \left(5 \sqrt{3}\right)\right)\,.\end{eqnarray}}

Now, we must verify that ${k_2}_1(s)>0$. 

If $s>0.9$ we can bound it from below by the following function
{\tiny \[e^{15 s} \left(-3-\sqrt{3} \left(1+e^{15}\right)+3 e^5 \left(e^{10}-2\right) \cos \left(\frac{\sqrt{3}}{2}\right)\right)+e^{15} \left(-3 e^5+\sqrt{3} \sin \left(5
	\sqrt{3}\right)+3 \cos \left(5 \sqrt{3}\right)\right).\]}

It is clear that it is positive for $s\in(s_1,1]$, where
{\tiny \[s_1=\frac{1}{15} \log \left(\frac{-3 e^{20}+\sqrt{3} e^{15} \sin \left(5 \sqrt{3}\right)+3 e^{15} \cos \left(5 \sqrt{3}\right)}{3+\sqrt{3}+\sqrt{3} e^{15}+6 e^5 \cos
		\left(\frac{\sqrt{3}}{2}\right)-3 e^{15} \cos \left(\frac{\sqrt{3}}{2}\right)}\right)\approxeq 0.510335,\]}
which ensures that $k_2(s)>0$ on $(0.9,1)$.

On the other hand, for every $s\in [0,1]$, function
$300 \,e^{10 s+5}\,k_2(s)$ is bounded from below by
 \[{k_2}_2=\frac{1}{100} \left(-476 e^{15 s}+303 e^{10 s+5}-e^{15}\right)\,,\]
 which is positive for $s\in(s_2,s_3)$, where
{\scriptsize \[s_2=1+\frac{1}{5} \log \left(\frac{101}{476}+\frac{101}{476} \sqrt{3} \sin \left(\frac{1}{3} \tan ^{-1}\left(\frac{476 \sqrt{973657}}{917013}\right)\right)-\frac{101}{476} \cos
	\left(\frac{1}{3} \tan ^{-1}\left(\frac{476 \sqrt{973657}}{917013}\right)\right)\right)\approxeq 0.438593\,,\]}
and
{ \[s_3=1+\frac{1}{5} \log \left(\frac{101}{476}+\frac{101}{238} \cos \left(\frac{1}{3} \tan ^{-1}\left(\frac{476 \sqrt{973657}}{917013}\right)\right)\right)\approxeq0.908\,.\]}

So, we  conclude that $k_2(s)>0$ for every $s\in(0,1)$.\\[.3cm]

Now, in order to deduce condition $(N_g)$, we only have to verify that $g_0(t,s)<0$ for every $(t,s)\in(0,1)\times(0,1)$.

If $t<s$ we can express \[g_0(t,s)=-\frac{e^{-5 (2 s+t)} \,\ell_1(s)\,\ell_2(t)}{3000
	\left(-3 e^5+e^{15}+2 \cos \left(5 \sqrt{3}\right)\right)}\,,\]
where
\begin{eqnarray*}
\nonumber \ell_1(s)&=&\left(-3 e^{10 s+5}+2 e^{15 s} \cos \left(5 \sqrt{3} (s-1)\right)+e^{15}\right), \\\nonumber\\\nonumber
\ell_2(t)&=&\left(-3 e^{5 t}+e^{15 t}+2 \cos \left(5 \sqrt{3} t\right)\right).
\end{eqnarray*}

So, we must prove that both functions are positive on $(0,1)$.

 $\ell_1(s)$ is a positive multiple of $k_1(s)$, so, as we have proved before, it is positive for $s\in(0,1)$. 

To study the sign of $\ell_2$, since it satisfies that $\ell_2(0)=\ell_2'(0)=\ell_2''(0)=0$, from the following expressions, valid for all $t\in[0,1]$,

\[\ell_2^{(3)}(t)=375 \left(-e^{5 t}+9 e^{15 t}+2 \sqrt{3} \sin \left(5 \sqrt{3} t\right)\right)\geq 375 \left(-e^{5 t}+9 e^{15 t}-2 \sqrt{3} \right)>0,\]
we  deduce that $\ell_2(t)>0$ for every $t\in(0,1)$. \\

Let us see now what happens for $0<s\leq t<1$.

We can express $g_0(t,s)$ as follows
\[g_0(t,s)=\dfrac{1}{3000}\left( p_2(t-s)-p_1(t,s)\right) \,,\quad 0<s\leq t<1\,,\]
where 
\begin{eqnarray}
\nonumber p_1(t,s)&=&\dfrac{e^{-5(2s+t)}\,\ell_1(s)\,\ell_2(t)}{-3\,e^5+e^{15}+2\,\cos\left( 5\sqrt{3}\right) }
\end{eqnarray}
and
\begin{eqnarray*}
p_2(r)&=&e^{10\,r}+2\,e^{-5\,r}\,\cos\left( 5\,\sqrt{3}\,r\right) -3\,.
\end{eqnarray*}

From the previously proved positiveness of $\ell_1$ and $\ell_2$, we know that  $p_1(t,s)>0$.

On the other hand, since  $p_2(0)=p_2'(0)=p_2''(0)=0$, if we verify that  $p_2^{(3)}(r)>0$ for every $r\in[0,1]$, then we  conclude that the same holds for $p_2$ on $(0,1]$. In this case

\[p_2^{(3)}(r)=1000 e^{10 r}+2000 e^{-5 r} \cos \left(5 \sqrt{3} r\right).\]

This function is trivially positive whenever $0 \le r\leq \frac{\pi }{10 \sqrt{3}}\approxeq0.18138$.

Moreover, for every $r\in[0,1]$, we have that
\[p_2^{(3)}(r)>1000 e^{10 r}-2000 e^{-5 r}\,,\]
which is positive if, and only if, $r>\frac{\log (2)}{15}\approxeq 0.0462$. 

As consequence we deduce that $p_2(r)>0$ for every $r\in(0,1]$.\\[.3cm]

Then if we prove that $p_2(t-s)<p_1(t,s)$ for $0<s\leq t<1$, we can conclude that $g_0(t,s)<0$.

Notice that, if we have two  strictly convex functions on a suitable interval, we may affirm that they  have at most two common points. In the sequel, to prove our result,  we use this property.

Since by definition $g_0(1,s)=0$, we know that $p_1(1,s)=p_2(1-s)$, for every fixed $s\in(0,1)$.

From the fact, proved before, that $k_2>0$ on $(0,1)$, we know that $g_0(t,s)<0$ on a neighborhood of $t=1$ for every $s\in(0,1)$. Then $p_1(t,s)>p_2(t-s)$ on a neighborhood of $t=1$ for every $s\in(0,1)$.

Let us see now that, for every $s\in(0,1)$, $p_1(t,s)$ and $p_2(t-s)$ are convex functions of $t$

By direct calculation, we have that
{ \[\dfrac{\partial^2}{\partial t^2}p_1(t,s)=\dfrac{100 e^{-5 (2 s+t)}  \left(e^{15 t}+\sqrt{3} \sin \left(5 \sqrt{3} t\right)-\cos \left(5 \sqrt{3}
t\right)\right)\, \ell_2(s)}{-3\,e^5+e^{15}+2\,\cos\left( 5\sqrt{3}\right) },\]}
so we only  need to verify that
\[{p_1}_1(t)=\left(e^{15 t}+\sqrt{3} \sin \left(5 \sqrt{3} t\right)-\cos \left(5 \sqrt{3}
t\right)\right)>0\,,\quad t\in (0,1)\,.\]

The following inequality is trivially fulfilled

\[{p_1}_1(t)>e^{15 t}+\sqrt{3} \sin \left(5 \sqrt{3} t\right)-1=q_1(t)\,,\quad t\in[0,1]\,.\]

We have that 
\[q_1'(t)=15\,e^{15\,t}+15\,\cos(5\,\sqrt{3}\,t)>15(e^{15\,t}-1)>0\,,\]
since $q_1(0)=0$, we conclude that $q_1>0$ and, as consequence, ${p_1}_1(t)>0$ on $(0,1]$ and also $\dfrac{\partial^2}{\partial t^2}p_1(t,s)>0$.

\vspace{0.8cm}

We have already proved that $p_2^{(3)}(r)>0$, for $r\in[0,1]$, and $p_2''(0)=0$, so for every fixed $s\in(0,1)$ $p_2''(t-s)>0$ for every $t\in(s,1]$.

As consequence, for any fixed $s\in(0,1)$, both $p_1(t,s)$ and $p_2(t-s)$ are convex functions of $t$.   

From the fact that $p_1(t,s)>p_2(t-s)$ on a neighborhood of $t=1$, $p_1(1,s)=p_2(1-s)$ and, also, $p_1(s,s)>0=p_2(0)$, we can affirm that $p_1(t,s)>p_2(t-s)$ for  $t\in[s,1)$, and then $g_0(t,s)<0$ for  $0<s\leq t<1$, and condition $(N_g)$ is fulfilled.\\[.3cm]

Now, as a consequence of Theorem \ref{T::7}, we know that $g_M(t,s)\leq 0$ for $M\in [0,-\lambda_1)$, where $\lambda_1<0$ is the biggest negative eigenvalue of $T_4[0]\,u(t)$ in $X_3$.

To verify that Theorem \ref{T::5} does not hold in this case we will prove that for $M<0$ the sign change does not come on the least positive eigenvalue of $T_4[0]\,u(t)$ in $X_2$.
	
	As in the previous section, we can obtain numerically the first eigenvalues of $T_4[0]$, which can be given by the following approximation values:
	\begin{itemize}
		\item The biggest negative eigenvalue in $X_1$ is $\lambda_3\approxeq-(12.529)^4$.
		\item The least positive eigenvalue in $X_2$ is $\lambda_2\approxeq(10.895)^4$.
		\item The biggest negative eigenvalue in $X_3$ in $\lambda_1\approxeq -(9.458)^4$.
	\end{itemize}

	\begin{remark}
	\label{r-non-dis}
	Realize that, since $T_4[0]\,u(t)=0$ is not disconjugate on $[0,1]$, we have no a priori information about the sign of the eigenvalues $\lambda_3$ and $\lambda_2$. However, since $g_0$ satisfies $(N_g)$, we can ensure, without calculate it, that  $\lambda_1<0$.
	\end{remark}
	
Finally, let's see that there exists $M^*>-\lambda_2$ for which $g_{M^*}$ has not constant sign on $I \times I$. 
	
We are going to study the following function
	\[v(t)=\dfrac{\partial }{\partial s}g_{M^*}(t,s)\mid_{s=0}\,.\]
As we have proved in the proof of Theorem \ref{T::5}, if this function has not constant sign on $I$ then the Green's function must necessarily change sing in a neighborhood of $s=0$.

For $M^*=-\frac{59584}{9} \approxeq -(9.02032)^4$, we have that $v(t)$ follows the next expression
{\tiny \begin{eqnarray}\nonumber 
	\frac{3 e^{-\frac{1}{3} \left(9+\sqrt{669}\right) t} \left(277 e^{6 t} \left(\left(213 \sqrt{669}-27875\right) e^{2 \sqrt{\frac{223}{3}} t}-27875-213 \sqrt{669}\right)+446
		e^{\sqrt{\frac{223}{3}} t} \left(537 \sqrt{831} \sin \left(\sqrt{\frac{277}{3}} t\right)+34625 \cos \left(\sqrt{\frac{277}{3}} t\right)\right)\right)}{8441871944}&&\\\nonumber-\frac{223 \left(537 \sqrt{831} \sin \left(\sqrt{\frac{277}{3}}\right)+34625 \cos \left(\sqrt{\frac{277}{3}}\right)\right)+277 e^6 \left(213 \sqrt{669} \sinh
		\left(\sqrt{\frac{223}{3}}\right)-27875 \cosh \left(\sqrt{\frac{223}{3}}\right)\right)}{8441871944 \left(-277 \left(2007+152 \sqrt{669}\right) e^6+277 \left(152
		\sqrt{669}-2007\right) e^{6+2 \sqrt{\frac{223}{3}}}+446 e^{\sqrt{\frac{223}{3}}} \left(2493 \cos \left(\sqrt{\frac{277}{3}}\right)-98 \sqrt{831} \sin
		\left(\sqrt{\frac{277}{3}}\right)\right)\right)}&&\\\nonumber
			6 e^{\sqrt{\frac{223}{3}}-\frac{1}{3} \left(9+\sqrt{669}\right) t} \left(277 e^{6 t} \left(\left(152 \sqrt{669}-2007\right) e^{2 \sqrt{\frac{223}{3}} t}-2007-152
			\sqrt{669}\right)+446 e^{\sqrt{\frac{223}{3}} t} \left(2493 \cos \left(\sqrt{\frac{277}{3}} t\right)-98 \sqrt{831} \sin \left(\sqrt{\frac{277}{3}} t\right)\right)\right)\,,&&\end{eqnarray}}
which, see Figure \ref{v-changes}, changes sign on $I$.

\begin{figure}[h]
\centering
\includegraphics[width=0.3\textwidth]{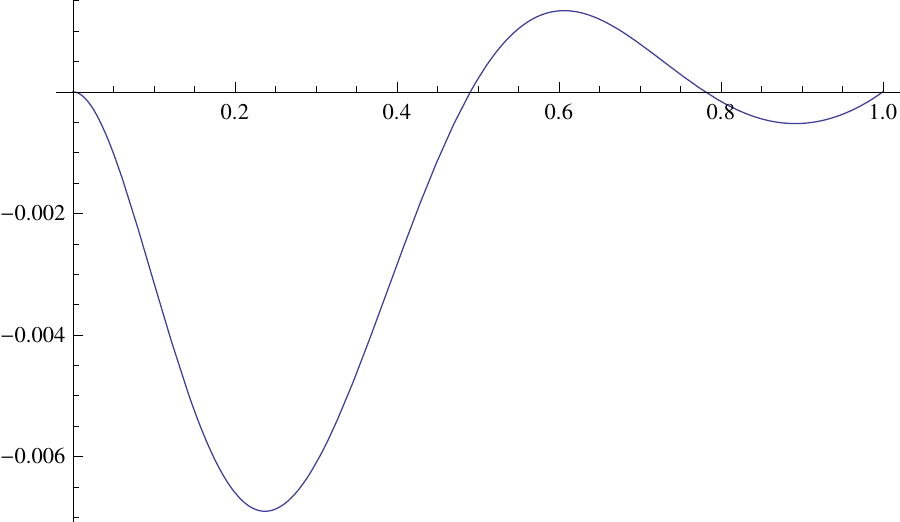}
\caption{\scriptsize{Graph of $v$}\label{v-changes}}
\end{figure}

As consequence the Green's function has not constant sign for a value of $M$ bigger than $-\lambda_2$.

	Even more, we can verify numerically which is the interval for $M$, where $g_M(t,s)$ is non-positive on $I \times I$. We  observe that change sign come first on the interior of $I\times I$.
	It comes in $(t,s)\approxeq (0.7186,0.0307)\in (0,1)\times (0,1)$ for $M \approxeq -(7.87022)^4$. So we deduce that it is given by $[-(7.87022)^4,-\lambda_1)$.
	
	As consequence we conclude the example that show us that  if we suppress the disconjugacy hypothesis, Theorem \ref{T::5} is not true in general.

\end{document}